\newtheorem{thm}{Theorem}
\newtheorem{lem}[thm]{Lemma}
\newtheorem{prop}[thm]{Proposition}
\newtheorem{conj}[thm]{Conjecture}
\theoremstyle{definition}
\newtheorem{definition}{Definition}[section]
\renewcommand{\epsilon}{\varepsilon}
\newcommand{\xto}{\xrightarrow}
\newcommand{\vp}{\varphi}
\newcommand{\mbb}{\mathbb}
\newcommand{\Z}{\mbb Z}
\newcommand{\T}{\mathcal T}
\newcommand{\im}{\text{im}}
\newcommand{\Tau}{\mathcal{T}}
\newcommand{\rib}{(G,\rho)}
\newcommand{\ribs}{(G',\rho')}
\newcommand{\ribss}{(G'',\rho'')}
\author{Alex McDonough}
\title{Determining Genus From Sandpile Torsor Algorithms}
\affiliation{
  Department of Mathematics, Brown University
  }
\keywords{sandpile group, sandpile torsor, ribbon graph}
\begin{document}
\publicationdetails{23}{2021}{1}{1}{6176}

\maketitle

\begin{abstract}
    We provide a pair of ribbon graphs that have the same rotor routing and Bernardi sandpile torsors, but different topological genus. This resolves a question posed by M. Chan. We also show that if we are given a graph, but not its ribbon structure, along with the rotor routing sandpile torsors, we are able to determine the ribbon graph's genus. 
\end{abstract}

\section{Introduction}

In this paper, we work with connected graphs that may have multiple edges between the same pair of vertices but we will not allow self loops. We will follow much of the same notation that is used in~\cite{Chan}. For a graph $G$, denote the set of vertices by $V(G)$, the set of edges by $E(G)$, and the set of spanning trees by $\Tau(G)$.\\

\subsection{The Sandpile Group}

For any graph $G$, define the group $\text{Div}(G)$ of $\textit{divisors}$ of $G$ as:

\[\text{Div}(G) := \{\sum_{v \in V(G)}n_vv\mid n_v\in\Z\}.\]

\noindent Define the subgroup $\text{Div}^0(G)$ of \textit{degree-0 divisors} of $G$ as:

$$\text{Div}^0(G) := \{\sum_{v \in V(G)}n_vv\mid n_v\in\Z, \sum_{v \in V(G)}n_v = 0\}$$

\noindent where in general, the \textit{degree} of a divisor is the sum $\sum_{v \in V(G)} n_v$. 

The \textit{Laplacian matrix} $\Delta$ of $G$ is the symmetric matrix defined by:

\[\Delta_{vw} = \begin{cases} -\text{deg}(v) &\text{ if $v = w$}\\
\text{number of edges connecting $v$ to $w$} &\text{ if $v \not= w$}\\
\end{cases}
\]

Finally, define the \textit{sandpile group} or \textit{Picard group} $\text{Pic}^0(G)$ as:

$$\text{Pic}^0(G) := \text{Div}^0(G)/\im(\Delta)$$

We can view the elements of $\text{Div}^0(G)$ as configurations on a graph where we place some number of ``chips'' on each vertex (allowing for negative chips but not fractional chips). The image of the graph Laplacian is generated by ``firing'' and ``unfiring'' vertices of $G$. When a vertex $v$ \textit{fires}, it sends one chip along each edge incident to $v$. This decreases the number of chips at $v$ by the degree of $v$ and increases the number of chips at every other vertex $w$ by the number of edges incident to both $v$ and $w$. When a vertex $v$ \textit{unfires}, it takes in one chip along each edge incident to $v$. This increases the number of chips at $v$ by the degree of $v$ and decreases the number of chips at every other vertex $w$ by the number of edges incident to both $v$ and $w$.

Thus, an equivalent definition of $\text{Pic}^0(G)$ is the abelian group whose elements are configurations of zero total chips on the vertices of $G$, whose binary operation is pointwise addition, and with the equivalence relation given by firing and unfiring vertices. In fact, since unfiring a single vertex is equivalent to firing every other vertex, we can generate our equivalence relation purely by firing vertices. This gives the following useful lemma:

\begin{lem}\label{lem1}
Two elements $S$ and $S'$ of $\text{Div}^0(G)$ are equivalent as elements of $\text{Pic}^0(G)$ if and only if there is a sequence of vertex firings that leads from $S$ to $S'$. 
\end{lem}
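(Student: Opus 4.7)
The plan is to prove both directions by translating between firings and elements of $\im(\Delta)$, with the only nontrivial step being the replacement of unfirings by firings.

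For the forward direction, I will observe that firing a vertex $v$ has the effect of adding the $v$-th column $\chi_v$ of the Laplacian $\Delta$ to the current divisor: by construction, $(\chi_v)_v = -\deg(v)$ and $(\chi_v)_w$ equals the number of edges between $v$ and $w$ for $w \neq v$, which is exactly what the firing move does. Therefore any sequence of firings changes the divisor by an element of $\im(\Delta)$, so the resulting divisor is equivalent to the starting one in $\text{Pic}^0(G)$. This gives one direction for free.

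For the reverse direction, assume $S' - S \in \im(\Delta)$, so that $S' - S = \sum_{v \in V(G)} n_v \chi_v$ for some integers $n_v \in \Z$. Naively this describes a sequence of firings and unfirings (firing $v$ a total of $n_v$ times, interpreting negative values as unfirings). The task is to convert this into a sequence using only firings. The key identity is
$$\sum_{v \in V(G)} \chi_v = 0,$$
which holds because every row of $\Delta$ sums to zero (each edge contributes $+1$ and $-1$ to the row). Hence for any integer $k$, we may rewrite $S' - S = \sum_v (n_v + k)\chi_v$, and by choosing $k$ large enough all coefficients become nonnegative. The resulting expression corresponds to a pure firing sequence: fire each vertex $v$ exactly $n_v + k$ times, in any order, and the net effect on $S$ is to produce $S'$.

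The main conceptual point, and the only place where one might worry, is that we are not constraining intermediate configurations to be ``legal'' in any combinatorial sense (for instance, nonnegative chip counts); we are simply adding integer vectors in $\text{Div}^0(G)$, and vector addition is commutative, so no order or reachability issue arises. Thus neither direction presents a genuine obstacle beyond unpacking the definition of $\Delta$ and invoking the column-sum identity to absorb unfirings into firings.
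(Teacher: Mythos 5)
Your proof is correct and is essentially the paper's own argument made explicit: the paper justifies the lemma by the one-line remark that unfiring a vertex equals firing all the others, which is precisely your column-sum identity $\sum_v \chi_v = 0$ rearranged, and your shift-by-$k$ trick is just a clean way of applying that remark to absorb all unfirings. Nothing further is needed.
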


\subsection{Sandpile Torsors}

\subsubsection{Relating $\text{Pic}^0(G)$ and $\Tau(G)$}

The narrative of this section is similar to the narrative given in the introduction of~\cite{Chan} and some of these ideas were also explored in~\cite{Wagner}. 

It is a well known fact that the size of the sandpile group of a graph $G$ is the same as the number of spanning trees of $G$ (as shown e.g. in~\cite{Biggs} and~\cite{Holroyd}). Thus, it is natural to ask whether there exists a canonical (automorphism invariant) bijection between these two sets. However, this is impossible in general because there is not always a distinguished spanning tree to associate with the identity element of the sandpile group. For example, a complete graph with more than two vertices has no distinguished spanning tree. 

The next best hope would be if there were a canonical \textit{free transitive action} of $\text{Pic}^0(G)$ acting on $\Tau(G)$. A free transitive action of a group $G$ on a set $S$ is a function $f: G \times S \to S$ such that for any pair $s,s' \in S$, there is a unique $g \in G$ such that $f(g,s) = s'$. A canonical free transitive action is also too much to ask for on a general graph. For example, on a graph with two vertices and three or more edges, each edge is a spanning tree and they are all indistinguishable. Furthermore, even after we select one of the edges, the remaining edges are still indistinguishable. 

To resolve this issue, we introduce additional structure on $G$. For each vertex $v \in V(G)$, assign a cyclic order $\rho_{v}$ to the edges incident to $v$. When this information is provided, $(G, \rho)$ is called a \textit{ribbon graph}, sometimes referred to as a \textit{combinatorial embedding} or a \textit{combinatorial map}. Even with the ribbon graph structure provided, there is not always a canonical choice of free transitive action. For example, if we have a graph with two vertices $v$ and $w$ and three edges $e_1,e_2$ and $e_3$ such that $\rho_{v} = \rho_{w} = (e_1,e_2,e_3)$, then there is no canonical way to decide whether the equivalence class of the sandpile group containing $(v-w)$ or the equivalence class of the sandpile group containing $(w-v)$ should send $e_1$ to $e_2$ (see Figure \ref{vertexneeded}).
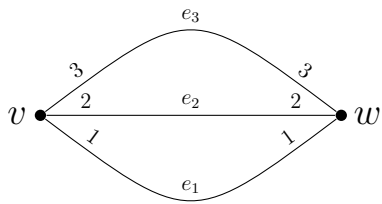
\begin{figure}
\begin{center}
\begin{tikzpicture}
    
    \tikzstyle{every node} = [circle,fill,inner sep=1pt,minimum size = 1.5mm]
    \node[label={west:{\large$v$}}](e) at (3,3) {};
    \node[label={east:{\large$w$}}](f) at (7,3){};

    \tikzstyle{every node} = [draw = none,fill = none,scale = .7]
    \draw (e) .. controls (5,1.5) .. (f)
         node[pos = .1,sloped,above]{1}
         node[pos = .9,sloped,above]{1}
         node[pos = .5,sloped,above]{$e_1$};
    \draw (e) .. controls (5,3) .. (f)
         node[pos = .1,sloped,above]{2}
         node[pos = .9,sloped,above]{2}
         node[pos = .5,sloped,above]{$e_2$};
    \draw (e) .. controls (5,4.5) .. (f)
     	 node[pos = .1,sloped,above]{3}
         node[pos = .9,sloped,above]{3}
         node[pos = .5,sloped,above]{$e_3$};

\end{tikzpicture}
\caption{A ribbon graph with no canonical free transitive action of its sandpile group acting on its spanning trees. The numbers give the cyclic order around each vertex. For the rest of this paper, if no labels are given, the order is assumed to be clockwise.} 
\label{vertexneeded}
\end{center}
\end{figure}

This final ambiguity can be fixed by associating our free transitive action with a distinguished vertex, that we call the $\textit{basepoint}$. 

\begin{definition}A \textit{sandpile torsor} of a ribbon graph $\rib$ is a free transitive action of $\text{Pic}^0(G)$ on $\T(G)$ given a basepoint $v \in V(G)$.
\end{definition}

\theoremstyle{definition}
\begin{definition}A \textit{sandpile torsor algorithm} $\alpha$ is a function whose input is a ribbon graph $(G,\rho)$ and one of its vertices $v \in V(G)$ and whose output is a sandpile torsor on $(G,\rho)$ with basepoint $v$. 
\end{definition}

The two sandpile torsor algorithms we will work with in this paper are the rotor routing process and the Bernardi process. We give a full description of these algorithms in Section~\ref{Torsors}.




\subsection{Summary of Results}

From a ribbon graph $\rib$, we obtain an associated surface by thickening the edges of $G$ and then gluing disks to the boundary components while respecting the cyclic orders given by $\rho$. The \textit{genus} of a ribbon graph $\rib$ is the genus of its associated surface.\footnote{Note that this is not the same as the \textit{combinatorial genus} of $G$ which is defined as $E(G) - V(G) + 1$.} A ribbon graph is called \textit{planar} if its genus is equal to 0. The inspiration for this paper comes from the following theorem proven in~\cite{Chan} for the rotor routing case and~\cite{Baker} for the Bernardi case.

\begin{thm}\label{bigone}
The rotor routing and Bernardi processes  on a  ribbon graph $\rib$ are invariant to the choice of basepoint if and only if $\rib$ is planar. 
\end{thm}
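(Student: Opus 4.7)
The plan proceeds by proving the two implications separately. For the forward direction (planar $\Rightarrow$ basepoint invariance), the key is to exhibit a canonical, basepoint-free torsor of $\text{Pic}^0(G)$ on $\T(G)$ for any planar ribbon graph and then check that both the rotor routing and Bernardi algorithms coincide with it. A natural candidate uses planar duality: for a genus-$0$ ribbon graph $(G,\rho)$, each spanning tree $T$ determines a dual spanning tree $T^*$ in the planar dual $G^*$ consisting of exactly the edges not in $T$, and the cyclic orders $\rho$ give a canonical pairing between vertices of $G$ and faces of $G$. The action of a chip-firing move $v-w$ on a tree $T$ can be described in terms of the unique tree paths between the endpoints in $T$ and in $T^*$, and this description references only the embedding data, never a basepoint. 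The first step is to set this dual description up as a well-defined free transitive action, and the second is to verify that unrolling the rotor routing and Bernardi algorithms on a planar ribbon graph recovers exactly this dual-based action.

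For the reverse direction (non-planar $\Rightarrow$ basepoint dependence), I would use a topological obstruction. Any ribbon graph with $g\geq 1$ contains a cycle $C$ that is non-contractible on the associated surface; the number of faces drops by $2g$ compared to the planar Euler count $F = 2 - V + E$, so some boundary walks must traverse edges of $C$ ``from both sides.'' The plan is to localize on such a $C$: choose two basepoints $v,w$ separated by $C$ in the sense that the unique $v$-to-$w$ paths in the surface wind around the handle, and build an explicit spanning tree $T$ on which firing a carefully chosen chip produces different rotor configurations depending on whether rotation is performed relative to $v$ or $w$. I would first carry this out on a minimal genus-$1$ model (for instance, the ``double edge'' ribbon graph in Figure~\ref{vertexneeded} after changing one cyclic order, or a one-vertex bouquet with a pair of loops whose cyclic order forms a handle), then extend by observing that every positive-genus ribbon graph admits a local region isomorphic, as ribbon subgraph, to such a model.

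The main obstacle I anticipate is the forward direction, and specifically matching the Bernardi process to the basepoint-free dual description. The Bernardi algorithm is defined via a boundary walk starting at a corner incident to the basepoint, so basepoint invariance is genuinely hidden in its definition; one must show that shifting the starting corner to any other corner around the same face of the ribbon graph yields the same bijection on $\T(G)$. In the planar case every boundary walk closes up along a disk, which is precisely what forces these rotations to commute with the action, but the argument requires careful bookkeeping through the tour. Rotor routing is comparatively easier here because firing moves commute, but relating its explicit rotor-update rule to the dual-tree description still takes work. By contrast, the reverse direction is less technical but demands care when choosing the witnessing basepoints and spanning tree, to ensure that the two actions truly differ on some element of $\T(G)$ rather than merely appearing to differ after a relabeling of trees.
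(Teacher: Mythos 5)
This theorem is not proved in the paper at all: it is imported from the literature, with the rotor routing direction attributed to~\cite{Chan} and the Bernardi direction to~\cite{Baker}. So there is no in-paper proof to compare your proposal against; what follows is an assessment of the proposal on its own terms, measured against the strategies actually used in those references.

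Your overall architecture (planar duality for the forward direction, reduction to a small non-planar model for the converse) is in the spirit of the cited proofs, but as written there are two genuine gaps. In the forward direction, the ``canonical dual-based action'' is never actually constructed: saying that the action of $v-w$ on $T$ ``can be described in terms of the unique tree paths in $T$ and $T^*$'' is a placeholder for the entire content of the hard step, and the verification that rotor routing and Bernardi both unroll to this description is precisely what the references spend most of their effort on (for Bernardi in particular, showing that moving the starting half-edge around the basepoint changes nothing is the whole battle, as you yourself note). More seriously, the converse direction's ``localize and extend'' step does not work as stated. Exhibiting a non-planar ribbon subgraph on which the two actions \emph{would} differ does not show they differ on $G$: spanning trees of $G$ do not restrict to spanning trees of the subgraph, the sandpile group of $G$ is not built from that of the subgraph, and a difference of actions on a local model can in principle be washed out globally. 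The literature handles this by proving that basepoint-independence is inherited under an appropriate minor operation (contraction/deletion compatible with the ribbon structure) and then checking the finitely many minimal non-planar ribbon graphs; without some such monotonicity statement your reduction has no teeth. Also note that a one-vertex bouquet of loops is excluded by this paper's standing convention of no self-loops, so your proposed minimal model would need to be loop-free.
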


This theorem suggests that we may be able to determine the genus of a ribbon graph from the structure of the sandpile torsors given by a sandpile torsor algorithm, a question posed by Melody Chan~\cite{Chan2}. However, the following theorem shows that this is not the case:

\begin{thm}\label{question1}
Let $\rib$ and $\ribs$ be two ribbon graphs with genera $g$ and $g'$ respectively and let $\alpha_v$ be the rotor routing or Bernardi process with basepoint $v$. Assume that $V(G) = V(G')$, $ \vp: \Tau(G)\to\Tau(G')$ is a bijection, and $\gamma: \text{Pic}^0(G) \to \text{Pic}^0(G')$ is an isomorphism such that for every vertex $v \in V(G)$ the following diagram commutes:

\[ \begin{tikzcd}[column sep = 4cm]
\text{Pic}^0(G) \times \Tau(G) \arrow{r}{\alpha_v(\text{Pic}^0(G))} \arrow[swap]{d}{\gamma\times \vp} & \Tau(G) \arrow{d}{\vp} \\
\text{Pic}^0(G') \times \Tau(G') \arrow{r}{\alpha_v(\text{Pic}^0(G'))}& \Tau(G')
\end{tikzcd}
\]\\

\noindent There exist $\rib$ and $\ribs$ satisfying the above conditions where $g \not= g'$.\\
\end{thm}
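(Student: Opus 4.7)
The plan is to prove Theorem~\ref{question1} by constructing an explicit pair of ribbon graphs $\rib$ and $\ribs$ with different genera that satisfy all of the stated conditions. Theorem~\ref{bigone} already provides a strong constraint: if either ribbon graph were planar, then the commuting diagram (which is required at every $v$) would force the other to have basepoint-independent torsors and hence be planar too. So both ribbon graphs must have positive genus, and they must differ by at least one.

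My first attempt would be to take $G = G'$ as underlying abstract graphs but $\rho \ne \rho'$; then $V(G) = V(G')$, $\Tau(G) = \Tau(G')$, and $\text{Pic}^0(G) = \text{Pic}^0(G')$ hold on the nose, and I can set $\vp$ and $\gamma$ to be identity maps. The problem then becomes: find two cyclic-order assignments on the same graph that yield different boundary-walk counts $F$ (hence different genera, via $V - E + F = 2 - 2g$), yet induce identical rotor routing actions of $\text{Pic}^0(G)$ on $\Tau(G)$ for every basepoint, and similarly for the Bernardi action. If no such pair exists, I would widen the search to graphs with the same vertex set but different edge sets, transporting the torsors through nontrivial $\gamma$ and $\vp$. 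In either case, verifying the genera differ is a routine application of the face-tracing algorithm plus Euler's formula, executed once an example is in hand.

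The main obstacle is verifying the coincidence of the torsors. Rotor routing depends sensitively on the cyclic orders at every vertex the chip traverses, so there is no obvious a priori reason for two distinct ribbon structures to produce identical actions. The most direct route is a finite but potentially large case analysis: for each basepoint $v$, each spanning tree $T$, and each generator $w - v$ of $\text{Pic}^0(G)$, compare the output spanning trees produced by rotor routing in $\rib$ and $\ribs$; once a candidate pair is found, the Bernardi torsor would be checked in parallel using the procedure of Section~\ref{Torsors}. I expect the example to arise from a carefully placed local modification of $\rho$ (such as a transposition of two edges in the cyclic order at a single vertex) whose effect cancels out within the rotor dynamics while still altering the thickened surface's face count, and the cleanest presentation is to exhibit such a pair and then verify the diagram exhaustively.
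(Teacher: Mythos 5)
Your proposal is a search plan rather than a proof: no pair of ribbon graphs is actually exhibited, and the primary branch of the search is provably empty. If you take $G = G'$ as abstract graphs and let $\gamma$ and $\vp$ both be the identity, the commuting diagram says the rotor routing (and Bernardi) torsors of $\rib$ and $\ribs$ are \emph{literally equal} as maps $\text{Pic}^0(G)\times\Tau(G)\to\Tau(G)$ for every basepoint. But the paper's Theorem~\ref{algorithm} shows that the underlying graph together with all the rotor routing torsors determines the genus, so two ribbon structures on the same graph with identical torsors must have the same genus. (For two-vertex graphs this is especially transparent: the action of the generator $v-w$ at basepoint $w$ reads off $\rho_v$ edge by edge, so equal torsors force $\rho=\rho'$; this is the content of Proposition~\ref{2good}.) Your fallback --- ``widen the search \dots transporting the torsors through nontrivial $\gamma$ and $\vp$'' --- is exactly where the real content lies, and it is left entirely unexecuted.

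The key idea you are missing is that $\gamma$ need not be induced by any map of vertices: it only has to be an abstract group isomorphism. The paper exploits this by taking both $G$ and $G'$ to be the two-vertex graph with $5$ edges, so $\text{Pic}^0\cong\Z/5\Z$, and letting $\gamma$ be multiplication by $2$. Since the orbit of the generator $v-w$ under rotor routing at basepoint $w$ traverses the edges in the cyclic order $\rho_v$, replacing the generator by its double lets $\rho'_{v_2}$ and $\rho'_{w_2}$ be a genuinely different pair of cyclic orders (obtained by ``skipping every other edge''), whose product has a different cycle count and hence, by Proposition~\ref{kauf}, a different genus ($1$ versus $2$). Verification then reduces, via Lemma~\ref{algfun}, to checking a single generator for each basepoint, together with the observation that on two-vertex graphs $\beta_v = r_w$ and $\beta_w = r_v$, so the Bernardi case comes for free. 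Without the nontrivial $\gamma$ (or some equally concrete mechanism), your outline cannot be completed.
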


We will construct two ribbon graphs demonstrating this theorem in Section \ref{c1}. In fact, we give a very small counterexample where $V(G) = V(G') = 2$ and $E(G) = E(G') = 5$. Note that if we require $g = 0$, Theorem~\ref{question1} does not hold (this is a corollary to Theorem~\ref{bigone}). 

For certain $G$ and $G'$, we can strengthen Theorem~\ref{question1} by requiring $\gamma$ to be a particular kind of map, which we will define in Section~\ref{c2}. 

\begin{thm}\label{question2}
Consider the same conditions as Question \ref{question1}, but where we require $\gamma$ to be induced by the identity on a suitable $V_{gen}\subseteq V(G)$. We can still find $\rib$ and $\ribs$ satisfying the above conditions where $g \not= g'$.
\end{thm}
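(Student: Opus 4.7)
The strategy is to construct an explicit counterexample, most likely by revisiting and adapting the five-edge pair from Section~\ref{c1}. The first task is to pin down what the strengthened hypothesis means: the natural reading is that $V_{gen}\subseteq V(G)=V(G')$ is a subset with the property that degree-zero divisors supported on $V_{gen}$ surject onto both $\text{Pic}^0(G)$ and $\text{Pic}^0(G')$, and the rule $[D]\mapsto [D]$ (for $D$ supported on $V_{gen}$) descends to a well-defined group isomorphism $\gamma$. Concretely, writing $\text{Div}^0_{V_{gen}}$ for the degree-zero divisors supported on $V_{gen}$, the surjections $\text{Div}^0_{V_{gen}}\twoheadrightarrow\text{Pic}^0(G)$ and $\text{Div}^0_{V_{gen}}\twoheadrightarrow\text{Pic}^0(G')$ must share the same kernel.

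Next I would examine the pair $\rib$, $\ribs$ from Section~\ref{c1} and compute both sandpile groups and the isomorphism $\gamma$ used there explicitly. Two outcomes are possible. Either (a) an appropriate $V_{gen}$ already exists inside $V(G)$, for instance the subset of vertices at which the two ribbon structures agree locally, in which case the existing $\gamma$ is itself identity-induced and the theorem follows immediately from Theorem~\ref{question1}; or (b) no such $V_{gen}$ exists, and the construction must be modified. In case (b), the natural fix is to augment both ribbon graphs by attaching identical extra structure (shared vertices and shared edges) to reshape the two Laplacians while preserving the genus gap, after which $V_{gen}$ can be taken to be the vertices where the ribbon data coincides.

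Once the graphs are fixed, one must produce the bijection $\vp:\Tau(G)\to\Tau(G')$ and verify the diagram of Theorem~\ref{question1} commutes for every basepoint $v$ with $\gamma$ of the identity-on-$V_{gen}$ form. Because the rotor routing and Bernardi torsors are both built from local data at each vertex, the bijection from Section~\ref{c1} (or a minor adaptation of it when extra shared structure is added) should carry over, with the commutativity at each fixed basepoint reducing to a finite, tractable case check against the explicit actions.

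The main obstacle is reconciling two competing rigidities: requiring $\gamma$ to be identity-induced on $V_{gen}$ forces the reduced Laplacians of $G$ and $G'$ (rows and columns indexed outside $V_{gen}$ removed) to have the same cokernel, while requiring $g\neq g'$ forces the global ribbon structures to differ nontrivially. In small examples these conditions are in tension, so exhibiting any example at all is the technical heart of the argument, and the need to satisfy both constraints simultaneously is what will drive the specific choice of ribbon graphs the author makes.
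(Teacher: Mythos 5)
Your reading of the hypothesis (degree-zero divisors supported on $V_{gen}$ surjecting onto both Picard groups with the same kernel, so that $[D]\mapsto[D]$ descends to an isomorphism) matches the paper's Lemma~\ref{vgen} setup, and you correctly identify the central tension. But your concrete plan has a gap that is fatal to its main branch: the paper proves (Proposition~\ref{2good}) that \emph{no} two-vertex counterexample exists. For two-vertex ribbon graphs, an identity-induced $\gamma$ forces $\rho_{v_2}=\varphi\circ\rho_{v_1}\circ\varphi^{-1}$ and $\rho_{w_2}=\varphi\circ\rho_{w_1}\circ\varphi^{-1}$ for the same relabeling $\varphi$, so the products $\rho_{v}\cdot\rho_{w}$ are conjugate and have the same cycle count, hence the same genus. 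The five-edge example of Section~\ref{c1} only works because its $\gamma$ doubles the chip counts, which is precisely what an identity-induced $\gamma$ forbids. So ``option (a)'' in your proposal can never occur, and adapting that example is a dead end rather than a starting point.

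Your fallback ``(b)'' --- augment both graphs with identical shared structure --- also does not point toward what actually works. The paper's counterexample (Theorem~\ref{ugly}) uses two \emph{non-isomorphic} underlying graphs on three vertices: a path $v_1$--$z_1$--$w_1$ with $2$ and $x$ parallel edges versus a path $v_2$--$z_2$--$w_2$ with $1$ and $2x$ parallel edges ($x$ odd), which do not even have the same number of edges. The point is that both have $2x$ spanning trees, both sandpile groups are cyclic of order $2x$ generated by $v_i-w_i$ (using $\Z/2\Z\oplus\Z/x\Z\cong\Z/2x\Z$ for odd $x$), and $V_{gen}=\{v_i,w_i\}$ omits the middle vertex entirely --- that omission is what gives enough slack to evade the two-vertex rigidity. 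Verifying commutativity then needs an additional structural lemma (the paper's Lemma~\ref{useful}) handling the cut vertex $z_i$, not just a finite case check of the Section~\ref{c1} bijection. Since your proposal neither exhibits such a pair nor identifies the mechanism (escaping to three vertices with $V_{gen}$ a proper subset and non-isomorphic underlying graphs), it does not constitute a proof.
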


While we could prove  Theorem~\ref{question1} by restricting to 2-vertex ribbon graphs, we show in Proposition~\ref{2good} that we need more vertices to prove Theorem~\ref{question2}. Nevertheless, we give a family of 3-vertex ribbon graphs that demonstrate this theorem in Section \ref{c2}. 

Because of the failure of these conjectures, any algorithm for determining the genus of a ribbon graph must require more information than just the orbits of the sandpile torsors produced by the rotor routing or Bernardi process. In Section~\ref{example}, we consider the case where we are given $V(G)$ and $E(G)$ but not $\rho$. In this setting, we show that if we are given the map $r_v$ (i.e. the rotor routing torsor with basepoint $v$) for every $v$ , we can determine the genus of $\rib$. Specifically, in Section~\ref{example} we prove:

\begin{thm}\label{algorithm}
Let $\rib$ be a ribbon graph such that $V(G)$ and $E(G)$ are known but $\rho$ is not. Suppose that for every $v \in V(G)$, we are given the map 

$$\text{Pic}^0(G) \times \Tau(G) \xto{r_v(\text{Pic}^0(G))} \Tau(G)$$

\noindent where $r_v$ is the rotor routing torsor with basepoint $v$ (and each $T \in \Tau$ is given as a subset of $E(G)$). Then, it is possible to determine the genus of $\rib$.\\
\end{thm}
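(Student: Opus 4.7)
The plan is to recover the ribbon structure $\rho$ from the given family of rotor routing maps $(r_v)_{v \in V(G)}$, up to an equivalence that preserves the genus, and then to read off the genus via Euler's formula applied to $\rho$. Recall that $r_v(D,T)$ is computed by a purely local procedure: one initializes rotors along the edges of $T$ pointing toward $v$, places the divisor $D$ as chips, and lets them perform rotor-walks until every chip has reached $v$; each visit of a chip to a non-basepoint vertex $u$ advances the rotor at $u$ one step in the cyclic order $\rho_u$. Hence $r_v$ depends on $\rho$ in a transparent, local way, and the task is to invert this dependence from the observable output spanning trees.

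For each vertex $u$, I would reconstruct $\rho_u$ as follows. Given a basepoint $v$, a spanning tree $T$, and a single-chip divisor $w-v$ arranged so that the rotor-walk from $w$ to $v$ visits $u$ exactly once, the rotor at $u$ in the final configuration that defines $r_v(w-v,T)$ is precisely the cyclic successor under $\rho_u$ of the edge of $T$ at $u$ pointing toward $v$; comparing $T$ with $r_v(w-v,T)$ therefore exhibits this successor. Varying $T$ so that each edge incident to $u$ serves as the initial tree-edge at $u$, one gathers the cyclic successor of every edge at $u$ and thereby determines $\rho_u$. Once all the $\rho_u$ are known, the face count $F$ of the ribbon graph $(G,\rho)$ is produced by the standard face-tracing procedure, and Euler's formula yields $g = 1 - (|V(G)| - |E(G)| + F)/2$.

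The principal obstacle is arranging single-visit rotor-walks: in general, the chip sent from $w$ may pass through $u$ several times before reaching $v$, conflating several consecutive rotor advances at $u$. I would handle this inductively, first recovering $\rho_u$ at vertices where a short walk (for instance $w \to u \to v$ with $u$ adjacent to both $w$ and $v$) naturally traps the chip into a single visit at $u$, and then using the already-recovered cyclic orders at simple vertices to disentangle the multi-visit outputs at more complicated ones. A simultaneous global reversal of all the $\rho_u$ would remain as an unavoidable ambiguity, but this symmetry preserves the face count and hence the genus, so it does not affect the final output of the algorithm.
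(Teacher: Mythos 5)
Your overall strategy (recover $\rho$ from the rotor routing data up to a genus-preserving ambiguity, then apply Euler's formula via Proposition~\ref{kauf}) is the same as the paper's, but the central step of your reconstruction has a genuine gap. You propose to read off the cyclic successor of an edge at $u$ by comparing $T$ with $r_v(w-v,T)$ in a situation ``arranged so that the rotor-walk from $w$ to $v$ visits $u$ exactly once.'' You cannot arrange or even detect this from the given data: the trajectory of the chip is governed by the unknown $\rho$, so a priori the chip may pass through $u$ many times, and the observed final rotor at $u$ is then the $k$-th successor for some unknown $k$. Worse, even when the output tree differs from $T$ at $u$ by a single edge swap $e_1 \mapsto e_2$, that alone does not certify $k=1$. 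The paper's Proposition~\ref{same} is exactly the missing ingredient: it sets the chip at the vertex $v$ whose order is being probed, takes the basepoint to be the far endpoint $w_2$ of the \emph{candidate} successor $e_2$, uses a spanning tree from Lemma~\ref{tree}, and --- crucially --- proves the converse direction that the output can equal $T\cup e_2\setminus e_1$ \emph{only if} $e_2$ immediately follows $e_1$. That converse is not formal bookkeeping; it requires showing two explicit divisors are inequivalent in $\text{Pic}^0(G)$ via a chip-firing argument, and your ``inductive disentangling'' sketch does not substitute for it.

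A second omission is the cut-vertex issue. The single-chip probes only determine the cyclic order at $v$ \emph{within each $v$-component}; how the edges of distinct $v$-components interleave in $\rho_v$ requires separate arguments (the paper's Lemmas~\ref{dif1} and~\ref{dif2}), and there remains a genuine ambiguity when all edges of one $v$-component fall into a single gap of another. That ambiguity is not resolvable from the rotor data and must instead be shown to be harmless for the genus (Lemma~\ref{subgraphs}); your proposal does not address this. (Minor point: the global-reversal ambiguity you flag is not actually present --- the rotor data distinguishes $\rho$ from its reversal since it detects successors rather than unordered adjacency --- though as you say it would be harmless if it were.)
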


\section{Two Sandpile Torsor Algorithms}\label{Torsors}
\subsection{Rotor Routing Process}

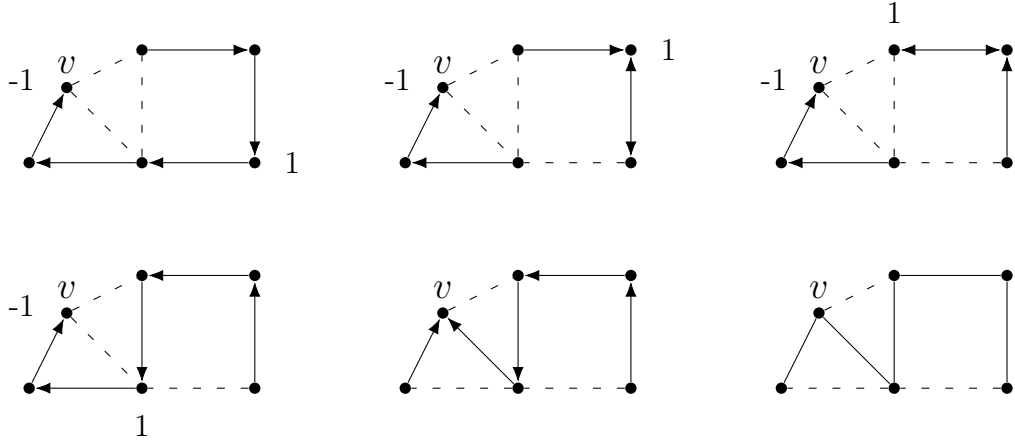
\begin{figure}
\begin{center}
\begin{tikzpicture}
    
    \tikzstyle{every node} = [circle,fill,inner sep=1pt,minimum size = 1.5mm]
    \node (a) at (3,4) {};
    \node(b) at (1.5,4){} ;
    \node[label={north:{\large$v$}}](c) at (2,5){};
    \node(d) at (3,5.5){};
    \node(e) at (4.5,4){};
    \node(f) at (4.5,5.5){};

    \node[fill = none]at (1.4,5.1) {-1};
    \node[fill = none]at (5,4) {1};
    
    \node (a2) at (8,4) {};
    \node(b2) at (6.5,4){} ;
    \node[label={north:{\large$v$}}](c2) at (7,5){};
    \node(d2) at (8,5.5){};
    \node(e2) at (9.5,4){};
    \node(f2) at (9.5,5.5){};
  
   \node[fill = none]at (6.4,5.1) {-1};
   \node[fill = none]at (10,5.5) {1};
    
   \node (a3) at (13,4) {};
    \node(b3) at (11.5,4){} ;
    \node[label={north:{\large$v$}}](c3) at (12,5){};
    \node(d3) at (13,5.5){};
    \node(e3) at (14.5,4){};
    \node(f3) at (14.5,5.5){};
 
    \node[fill = none]at (11.4,5.1) {-1};
    \node[fill = none]at (13,6) {1};
   
    \node (a4) at (3,1) {};
    \node(b4) at (1.5,1){} ;
    \node[label={north:{\large$v$}}](c4) at (2,2){};
    \node(d4) at (3,2.5){};
    \node(e4) at (4.5,1){};
    \node(f4) at (4.5,2.5){};
    
    \node[fill = none]at (1.4,2.1) {-1};
    \node[fill = none]at (3,.5) {1};
    
   \node (a5) at (8,1) {};
    \node(b5) at (6.5,1){} ;
    \node[label={north:{\large$v$}}](c5) at (7,2){};
    \node(d5) at (8,2.5){};
    \node(e5) at (9.5,1){};
    \node(f5) at (9.5,2.5){};
    
   \node (a6) at (13,1) {};
    \node(b6) at (11.5,1){} ;
    \node[label={north:{\large$v$}}](c6) at (12,2){};
    \node(d6) at (13,2.5){};
    \node(e6) at (14.5,1){};
    \node(f6) at (14.5,2.5){};
    \tikzstyle{every node} = [draw = none,fill = none,scale = .7]
    
    \draw [-{Latex[length=2mm,width=1.5mm]}](a) --(b);
    \draw[loosely dashed](c) --(a);
    \draw[loosely dashed] (a) --(d);
    \draw [-{Latex[length=2mm,width=1.5mm]}](e)--(a);
    \draw [-{Latex[length=2mm,width=1.5mm]}](b) --(c);
    \draw[loosely dashed] (c) --(d);
    \draw[-{Latex[length=2mm,width=1.5mm]}](d) --(f);
    \draw[-{Latex[length=2mm,width=1.5mm]}](f) --(e);
    
    \draw [-{Latex[length=2mm,width=1.5mm]}](a2) --(b2);
    \draw[loosely dashed](c2) --(a2);
    \draw[loosely dashed] (a2) --(d2);
    \draw[loosely dashed](e2)--(a2);
    \draw [-{Latex[length=2mm,width=1.5mm]}](b2) --(c2);
    \draw[loosely dashed] (c2) --(d2);
    \draw[-{Latex[length=2mm,width=1.5mm]}](d2) --(f2);
    \draw[{Latex[length=2mm,width=1.5mm]}-{Latex[length=2mm,width=1.5mm]}](f2) --(e2);
    
    \draw [-{Latex[length=2mm,width=1.5mm]}](a3) --(b3);
    \draw[loosely dashed](c3) --(a3);
    \draw[loosely dashed] (a3) --(d3);
    \draw[loosely dashed](e3)--(a3);
    \draw [-{Latex[length=2mm,width=1.5mm]}](b3) --(c3);
    \draw[loosely dashed] (c3) --(d3);
    \draw[{Latex[length=2mm,width=1.5mm]}-{Latex[length=2mm,width=1.5mm]}](d3) --(f3);
    \draw[{Latex[length=2mm,width=1.5mm]}-](f3) --(e3);

    \draw [-{Latex[length=2mm,width=1.5mm]}](a4) --(b4);
    \draw[loosely dashed](c4) --(a4);
    \draw[{Latex[length=2mm,width=1.5mm]}-] (a4) --(d4);
    \draw[loosely dashed](e4)--(a4);
    \draw [-{Latex[length=2mm,width=1.5mm]}](b4) --(c4);
    \draw[loosely dashed] (c4) --(d4);
    \draw[{Latex[length=2mm,width=1.5mm]}-](d4) --(f4);
    \draw[{Latex[length=2mm,width=1.5mm]}-](f4) --(e4);
       
    \draw [loosely dashed](a5) --(b5);
    \draw[{Latex[length=2mm,width=1.5mm]}-](c5) --(a5);
    \draw[{Latex[length=2mm,width=1.5mm]}-] (a5) --(d5);
    \draw[loosely dashed](e5)--(a5);
    \draw [-{Latex[length=2mm,width=1.5mm]}](b5) --(c5);
    \draw[loosely dashed] (c5) --(d5);
    \draw[{Latex[length=2mm,width=1.5mm]}-](d5) --(f5);
    \draw[{Latex[length=2mm,width=1.5mm]}-](f5) --(e5);
    
    \draw [loosely dashed](a6) --(b6);
    \draw(c6) --(a6);
    \draw (a6) --(d6);
    \draw[loosely dashed](e6)--(a6);
    \draw (b6) --(c6);
    \draw[loosely dashed] (c6) --(d6);
    \draw(d6) --(f6);
    \draw(f6) --(e6);
\end{tikzpicture}
\caption{A demonstration of the rotor routing torsor with basepoint $v$ acting on the given spanning tree by the configuration with 1 chip on the bottom right vertex, -1 chips on $v$, and no chips elsewhere.} 
\label{rr}
\end{center}
\end{figure}

The \textit{rotor routing process} is a sandpile torsor algorithm described in~\cite{Holroyd} and based on the ``Eulerian walkers model'' from~\cite{Priezzhev}. 

For $v \in V(G)$, denote $r_v$ as the sandpile torsor with basepoint $v$ determined by the rotor routing process (or the \textit{rotor routing torsor with basepoint $v$} for short). For $S \in \text{Pic}^0(G)$ and $T \in \Tau(G)$, define $r_v(S,T)$ in the following way:

Choose a representative of $S$ with a nonnegative number of chips away from $v$. Then, direct the edges of $T$ so that they point towards $v$ along the path of $T$. There is now one directed edge coming out of every vertex $w \not= v$. This edge is called the \textit{rotor} at $w$. Choose any vertex $w$ that has a positive number of chips. Then, rotate the rotor at $w$ to the next edge in $\rho_w$ and send a chip from $w$ to the other vertex incident to this edge. Continue this process until every vertex has zero chips (at which point the chips have all been deposited at $v$). The resulting position of the rotors is independent of the order that the rotors are rotated and, after removing the directional information, produces a new spanning tree $T'$. See Figure \ref{rr} for an example.

It is proven in~\cite{Holroyd} that $r_v$ is a well-defined free transitive action. 

\subsection{Bernardi Process}

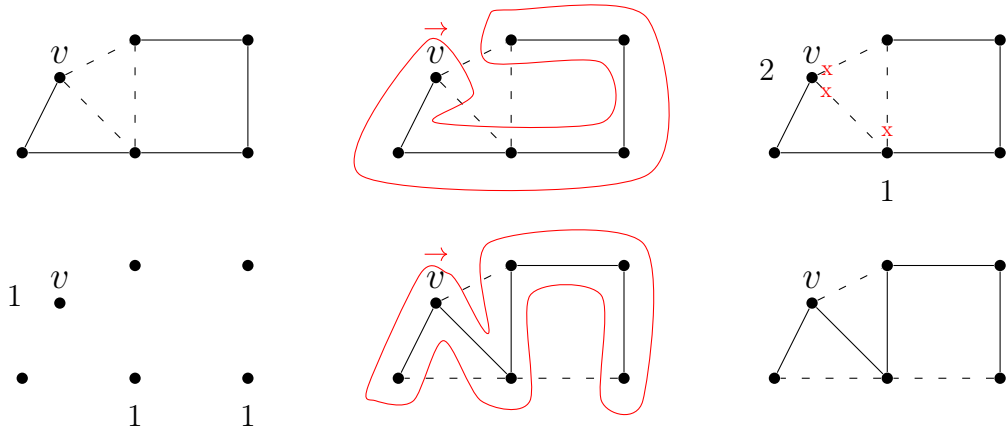
\begin{figure}
\begin{center}
\begin{tikzpicture}
    
    \tikzstyle{every node} = [circle,fill,inner sep=1pt,minimum size = 1.5mm]
    \node (a) at (3,4) {};
    \node(b) at (1.5,4){} ;
    \node[label={north:{\large$v$}}](c) at (2,5){};
    \node(d) at (3,5.5){};
    \node(e) at (4.5,4){};
    \node(f) at (4.5,5.5){};
    
    \node (a2) at (8,4) {};
    \node(b2) at (6.5,4){} ;
    \node[label={north:{\large$v$}}](c2) at (7,5){};
    \node(d2) at (8,5.5){};
    \node(e2) at (9.5,4){};
    \node(f2) at (9.5,5.5){};
    
   \node (a3) at (13,4) {};
    \node(b3) at (11.5,4){} ;
    \node[label={north:{\large$v$}}](c3) at (12,5){};
    \node(d3) at (13,5.5){};
    \node(e3) at (14.5,4){};
    \node(f3) at (14.5,5.5){};
 
    \node[fill = none]at (11.4,5.1) {2};
    \node[fill = none]at (13,3.5) {1};
   
    \node (a4) at (3,1) {};
    \node(b4) at (1.5,1){} ;
    \node[label={north:{\large$v$}}](c4) at (2,2){};
    \node(d4) at (3,2.5){};
    \node(e4) at (4.5,1){};
    \node(f4) at (4.5,2.5){};
    
    \node[fill = none]at (1.4,2.1) {1};
    \node[fill = none]at (3,.5) {1};
    \node[fill = none]at (4.5,.5) {1};
    
   \node (a5) at (8,1) {};
    \node(b5) at (6.5,1){} ;
    \node[label={north:{\large$v$}}](c5) at (7,2){};
    \node(d5) at (8,2.5){};
    \node(e5) at (9.5,1){};
    \node(f5) at (9.5,2.5){};
    
   \node (a6) at (13,1) {};
    \node(b6) at (11.5,1){} ;
    \node[label={north:{\large$v$}}](c6) at (12,2){};
    \node(d6) at (13,2.5){};
    \node(e6) at (14.5,1){};
    \node(f6) at (14.5,2.5){};
    \tikzstyle{every node} = [draw = none,fill = none,scale = .7]
    
    \draw (a) --(b);
    \draw[loosely dashed](c) --(a);
    \draw[loosely dashed] (a) --(d);
    \draw (e)--(a);
    \draw (b) --(c);
    \draw[loosely dashed] (c) --(d);
    \draw(d) --(f);
    \draw(f) --(e);
    
    \draw (a2) --(b2);
    \draw[loosely dashed](c2) --(a2);
    \draw[loosely dashed] (a2) --(d2);
    \draw(e2)--(a2);
    \draw (b2) --(c2);
    \draw[loosely dashed] (c2) --(d2);
    \draw(d2) --(f2);
    \draw(f2) --(e2);
    
    \draw[red] plot [smooth cycle] coordinates {(7.5,4.8)(7,4.4)(9.2,4.4)(9.2,5.2)(7.7,5.2)(7.7,5.8)(8,5.9)(9.8,5.8)(9.8,3.7)(6,3.7)(6.9,5.5)};
    \draw[red,->](6.85,5.65)--(7.15,5.65);
    
    \draw (a3) --(b3);
    \draw[loosely dashed](c3) --(a3)
    node[pos = .15,red]{x};
    \draw[loosely dashed] (a3) --(d3)
    node[pos = .15,red]{x};
    \draw(e3)--(a3);
    \draw (b3) --(c3);
    \draw[loosely dashed] (c3) --(d3)
    node[pos = .15,red]{x};
    \draw(d3) --(f3);
    \draw(f3) --(e3);

    \draw [loosely dashed](a5) --(b5);
    \draw(c5) --(a5);
    \draw (a5) --(d5);
    \draw[loosely dashed](e5)--(a5);
    \draw(b5) --(c5);
    \draw[loosely dashed] (c5) --(d5);
    \draw(d5) --(f5);
    \draw(f5) --(e5);
    
    \draw[red] plot [smooth cycle] coordinates {(7.3,2.25)(7.7,1.6)(7.7,2.8)(9.75,2.8)(9.75,.7)(9.2,.7)(9.2,2.1)(8.25,2.1)(8.25,.7)(7.6,.7)(7.1,1.5)(6.7,.7)(6.2,.7)(6.1,.9)(6.8,2.4)(7.15,2.4)};
    \draw[red,->](6.85,2.65)--(7.15,2.65);
    
    \draw [loosely dashed](a6) --(b6);
    \draw(c6) --(a6);
    \draw (a6) --(d6);
    \draw[loosely dashed](e6)--(a6);
    \draw (b6) --(c6);
    \draw[loosely dashed] (c6) --(d6);
    \draw(d6) --(f6);
    \draw(f6) --(e6);
    
\end{tikzpicture}
\caption{A demonstration of the Bernardi torsor with basepoint $v$ acting on the given spanning tree by the configuration with 1 chip on the bottom right vertex, $-1$ chips on $v$ and no chips elsewhere. Note that it is not a coincidence that this action produces the same spanning tree as the rotor routing action in figure \ref{rr}. It is shown in~\cite{Baker} that the rotor routing and Bernardi actions are identical to each other on planar graphs.} 
\label{b}
\end{center}
\end{figure}

The \textit{Bernardi process} is another sandpile torsor algorithm that is described in~\cite{Baker} based on results from~\cite{Bernardi}.

For $v \in V(G)$, denote $\beta_v$ as the sandpile torsor with basepoint $v$ determined by the Bernardi process (or the \textit{Bernardi torsor with basepoint $v$} for short). For $S \in \text{Pic}^0(G)$ and $T \in \Tau(G)$, define $\beta_v(S,T)$ in the following way:

Consider an edge $e$ incident to vertices $v_1$ and $v_2$ to be composed of two \textit{half-edges} $(e,v_1)$ and $(e,v_2)$. Choose an arbitrary edge $e$ incident to $v$. (The choice of $e$ does not affect the action). We first need to find the \textit{break divisor} associated with each spanning tree.  To get the break divisor associated with $T$, we follow a recursive procedure beginning at the half-edge $(e,v)$ and continuing until we return to $(e,v)$. Informally, this procedure traces around $T$ and places chips the first time it crosses each edge that is not in $T$. Say that our current edge is $(e',v')$. There are 2 cases:

1) If $e' \in T$, we consider the other half edge associated to $e'$, say $(e',w')$. Then, we move to the half edge $(e'',w')$ where $e''$ is the next edge after $e'$ in $\rho_{w'}$ and restart the procedure with $(e'',w')$ as our new half edge.

2) If $e' \not\in T$, we consider the half edge $(\tilde e,v')$ where $\tilde e$ is the next edge after $e'$ in $\rho_{v'}$. Furthermore, if we have not already passed through the other half edge involving $e'$, we place a chip on $v'$. Then we restart the procedure with $(\tilde e, v')$ as our new half edge.\\

This process continues until we return to $(e,v)$. At this point, we will have placed one chip for each edge not in $T$, so this gives us an element of $\text{Div}^g(G)$ for $g = E(G) - V(G) + 1$ where

$$\text{Div}^g(G) := \{\sum_{v \in V(G)}n_vv|n_v\in\Z, \sum_{v \in V(G)}n_v = g\}$$

\noindent It is shown in~\cite{Baker2} that when we apply the Bernardi process to each spanning tree, the resulting chip configurations are all unique as elements of

$$\text{Pic}^g(G) := \text{Div}^g(G)/\im(\Delta).$$

The element of $\text{Pic}^g(G)$ associated to the spanning tree $T$ by this process is called the \textit{break divisor} associated to $T$.\footnote{See ~\cite{Baker} for a complete definition of \textit{break divisor}.} $\beta_v(S,T)$ is given by adding $S$ to the break divisor associated to $T$, which gives us a new element of $\text{Pic}^g(G)$, and then finding the spanning tree $T'$ for which this is the break divisor. See Figure \ref{b} for an example.

It is proven in~\cite{Baker} that $\beta_v$ is a well-defined free transitive action, and an efficient algorithm is provided to find the tree associated with a given break divisor.\\
\section{Counterexamples}\label{counterexample}

We first give an algebraic result that will help to prove both Theorem~\ref{question1} and Theorem~\ref{question2}. In particular, this result says that in order to show that the diagrams commute, we only need to test on a set of generators of $\text{Pic}^0(G)$. 

\begin{lem}\label{algfun}
Let $H$ be a group and $X$ be a set such that $\gamma$ is an automorphism on $H$, $\varphi$ is an automorphism on $X$, and $\alpha$ is a group action from $H \times X \to X$. 

Let $\{h_i\}$ be a set of generators for $H$ and $x$ be an arbitrary element of $X$. If $\varphi(\alpha(h_i,x)) = \alpha(\gamma(h_i),\varphi(x))$ for all $h_i$, then $\varphi(\alpha(h,x)) = \alpha(\gamma(h),\varphi(x))$ for all $h \in H$. 
\end{lem}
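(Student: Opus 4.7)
The plan is to recognize this as a standard subgroup argument. Define
\[ K := \{h \in H : \varphi(\alpha(h, x)) = \alpha(\gamma(h), \varphi(x)) \text{ for all } x \in X\}. \]
I would show $K$ is a subgroup of $H$; since the hypothesis gives $h_i \in K$ for each generator, this forces $K = H$, completing the proof. (I read the lemma as implicitly quantifying $x$ over all of $X$, since the closure-under-products step below needs the equation to hold at more than one point. In the torsor settings where Lemma~\ref{algfun} is used this is harmless: an equation of the desired form on a torsor that holds at a single element automatically propagates to every element by translating through the torsor structure.)

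The identity $e_H$ lies in $K$ because $\gamma(e_H) = e_H$ and $\alpha(e_H, \cdot)$ is the identity on $X$. For closure under products, given $h_1, h_2 \in K$, I would expand $\varphi(\alpha(h_1 h_2, x))$ using the action axiom as $\varphi(\alpha(h_1, \alpha(h_2, x)))$, apply the hypothesis for $h_1$ at the point $\alpha(h_2, x)$, then the hypothesis for $h_2$ at $x$, and collapse the result using that $\gamma$ is a homomorphism. For closure under inverses, given $h \in K$, I would set $y := \alpha(h^{-1}, x)$ so that $\alpha(h, y) = x$, apply $\varphi$ and the hypothesis for $h$ at $y$ to get $\varphi(x) = \alpha(\gamma(h), \varphi(y))$, and then act on both sides by $\gamma(h)^{-1} = \gamma(h^{-1})$ to extract the desired equation at $x$ with $h^{-1}$.

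The main obstacle is really just cosmetic: getting the quantifiers on $x$ pinned down correctly so the multiplicative closure step goes through. Once one accepts that the hypothesis holds for each generator at every point of $X$, the rest is a routine generators-and-relations subgroup argument, and no special structure of $X$ or of the action $\alpha$ is required beyond the group-action axioms and $\gamma,\varphi$ being bijections.
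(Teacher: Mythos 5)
Your proof is correct, and the heart of it --- the chain
$\varphi(\alpha(h_1h_2,x)) = \varphi(\alpha(h_1,\alpha(h_2,x))) = \alpha(\gamma(h_1),\varphi(\alpha(h_2,x))) = \alpha(\gamma(h_1),\alpha(\gamma(h_2),\varphi(x))) = \alpha(\gamma(h_1h_2),\varphi(x))$
--- is exactly the computation the paper uses. The packaging differs: the paper writes $h = h_1^{k_1}\cdots h_n^{k_n}$ and inducts on the degree of this monomial, peeling off one generator at a time, whereas you define the set $K$ of elements satisfying the identity and show it is a subgroup containing the generators. Your version buys two things. First, you explicitly verify closure under inverses; the paper's induction assumes ``without loss of generality $k_1>0$'' and never addresses a word all of whose exponents are negative, so strictly speaking it only covers the submonoid generated by the $h_i$ (harmless in the intended application, where $H = \mathrm{Pic}^0(G)$ is finite abelian, but a gap for the lemma as stated for general groups). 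Second, you flag the quantifier issue honestly: both your product step and the paper's inductive step need the hypothesis for each generator at \emph{every} point of $X$ (the paper applies it at $\alpha(h',x)$, you at $\alpha(h_2,x)$), so the phrase ``$x$ an arbitrary element of $X$'' must be read as universally quantified; your remark that in the torsor setting a single instance propagates to all of $X$ is a nice additional observation the paper does not make.
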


\begin{proof}
By definition, we can write any $h \in H$ as $h_1^{k_1}h_2^{k_2}...h_n^{k_n}$. We will proceed by induction on the degree of this monomial. 

When the degree is 1, $h$ is a generator and the result holds automatically. For an arbitrary $h$, assume without loss of generality that $k_1>0$. Then, we can write $h = h_1h'$ where the degree of $h'$ is one less than the degree of $h$. 

For any $x \in X$, the lemma follows from this chain of equalities (which hold by the definition of a group action and the induction hypothesis):

\[\varphi(\alpha(h_1h',x)) = \varphi(\alpha(h_1,\alpha(h',x))) = \alpha(\gamma(h_1),\varphi(\alpha(h',x))) =\] 

\[=\alpha(\gamma(h_1),\alpha(\gamma(h'),\varphi(x)))= \alpha(\gamma(h_1)\gamma(h'),\varphi(x)) = \alpha(\gamma(h),\varphi(x)).\]
\end{proof}
\subsection{Unrestricted $\gamma$ (Theorem~\ref{question1})}\label{c1}

We can prove Theorem~\ref{question1} while only considering ribbon graphs with 2 vertices. For these graphs, each edge is a spanning tree, and there are several other nice properties. We begin with a well known result that is straightforward to prove either by the definition of $\text{Pic}^0(G)$ or by the chip-firing perspective. 

\begin{lem}
If $G$ is a graph with 2 vertices and $n$ edges then $\text{Pic}^0(G) \cong \Z/n\Z$. Furthermore, two configurations are equivalent as elements of $\text{Pic}^0(G)$ if and only if the number of chips on a fixed vertex differ by a multiple of $n$. 
\end{lem}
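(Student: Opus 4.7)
The plan is to compute $\text{Pic}^0(G)$ directly from the definitions. Label the two vertices $v$ and $w$. First I would parametrize $\text{Div}^0(G)$ by sending $k\cdot v - k\cdot w \mapsto k$; since the degree-zero condition forces the number of chips on $w$ to be the negative of the number on $v$, this is a group isomorphism $\text{Div}^0(G) \cong \Z$, and under it the ``number of chips on $v$'' coordinate becomes the identification with $\Z$.

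Next I would write down the Laplacian. Since $v$ and $w$ each have degree $n$ and are joined by $n$ parallel edges,
\[
\Delta \;=\; \begin{pmatrix} -n & n \\ n & -n \end{pmatrix}.
\]
The image of $\Delta$ is generated by its columns, namely $-n\cdot v + n\cdot w$ and $n\cdot v - n\cdot w$. Under the identification $\text{Div}^0(G) \cong \Z$ from the previous step, both generators correspond to $\pm n$, so $\im(\Delta) \subseteq \text{Div}^0(G)$ corresponds to the subgroup $n\Z \subseteq \Z$. Therefore
\[
\text{Pic}^0(G) \;=\; \text{Div}^0(G)/\im(\Delta) \;\cong\; \Z/n\Z.
\]

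For the second statement, two configurations $S, S' \in \text{Div}^0(G)$ are equivalent in $\text{Pic}^0(G)$ iff $S - S' \in \im(\Delta)$. Under the chosen identification, $S - S'$ corresponds to the integer $k$ equal to the difference of chip counts at $v$ (equivalently, the negative of the difference at $w$), and we have just shown $\im(\Delta)$ corresponds to $n\Z$. So $S \sim S'$ iff $n \mid k$, which is exactly the asserted criterion. Since the statement is routine given the small size of the matrix, I do not anticipate any real obstacle; the only thing to be a little careful about is confirming that the two columns of $\Delta$ generate the same subgroup of $\text{Div}^0(G)$ (they are negatives of one another), so that $\im(\Delta)$ is genuinely the rank-one subgroup $n\Z$ rather than something larger.
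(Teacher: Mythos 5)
Your proof is correct: the identification $\mathrm{Div}^0(G)\cong\Z$, the explicit $2\times 2$ Laplacian, and the observation that its two columns both generate $n\Z$ give exactly the claimed isomorphism and equivalence criterion. The paper itself leaves this lemma unproved, remarking only that it is "straightforward to prove either by the definition of $\mathrm{Pic}^0(G)$ or by the chip-firing perspective," and your argument is precisely the first of those two routes, so there is nothing to reconcile.
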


There is a known formula for the genus of a ribbon graph $\rib$. Define a \textit{cycle} on a ribbon graph $\rib$ as a closed loop such that whenever we enter a vertex, we exit along the next edge in the cyclic order at that vertex. It was shown in~\cite{Edmonds} that these cycles are the faces of the surface associated to $\rib$. Thus, we have the following by Euler's formula (where $cyc(G,\rho)$ is the number of cycles of $\rib$):


\begin{prop}\label{kauf}
For a ribbon graph $\rib$, the genus $g$ satisfies $2g = 2 - |V(G)| + |E(G)| - \text{cyc}\rib$.
\end{prop}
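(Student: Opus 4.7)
The plan is to apply Euler's formula to the closed orientable surface $\Sigma$ associated to $(G,\rho)$, using the CW-structure inherited from the ribbon construction. First I would recall that $\Sigma$ is built by thickening each vertex to a disk (a $0$-handle), thickening each edge to a rectangle glued on according to the cyclic order $\rho$ (a $1$-handle), and then capping off each boundary component of the resulting surface with a disk (a $2$-handle). This gives $\Sigma$ a natural CW-structure whose $0$-cells are indexed by $V(G)$, whose $1$-cells are indexed by $E(G)$, and whose $2$-cells are indexed by the boundary components of the thickened graph.

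Next I would invoke the cited result of Edmonds identifying the boundary components of the thickened ribbon graph with the cycles in the sense defined just above the proposition: tracing along the boundary of the thickened graph, each time one enters a vertex one exits along the next edge in $\rho$, so the boundary traversals are exactly the closed loops counted by $\mathrm{cyc}(G,\rho)$. In particular, the number of $2$-cells in the CW-decomposition of $\Sigma$ equals $\mathrm{cyc}(G,\rho)$.

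Finally, since $\Sigma$ is a closed orientable surface of genus $g$, its Euler characteristic satisfies $\chi(\Sigma) = 2 - 2g$, while the CW-structure gives
\[
\chi(\Sigma) = |V(G)| - |E(G)| + \mathrm{cyc}(G,\rho).
\]
Equating these two expressions and rearranging yields
\[
2g = 2 - |V(G)| + |E(G)| - \mathrm{cyc}(G,\rho),
\]
which is the desired formula. The only nontrivial input is the identification of faces with $\rho$-cycles, which is exactly the content of the cited theorem of Edmonds; everything else is Euler's formula for a CW-decomposition of a closed surface. So there is no real obstacle here — the proof is essentially a one-line application of $\chi = V - E + F$ once the ribbon-graph-to-surface dictionary is in place.
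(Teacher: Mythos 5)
Your proof is correct and follows essentially the same route as the paper, which likewise cites Edmonds' identification of the $\rho$-cycles with the faces of the associated surface and then applies Euler's formula $\chi = |V| - |E| + F = 2 - 2g$. No further comment is needed.
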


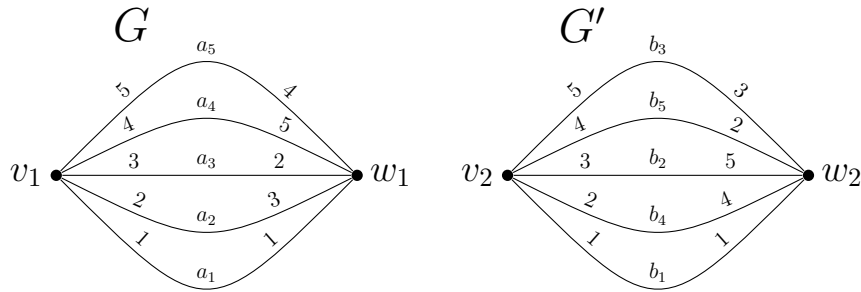
\begin{figure}
\begin{center}
\begin{tikzpicture}
    
    \tikzstyle{every node} = [circle,fill,inner sep=1pt,minimum size = 1.5mm]
    \node[label=west:{\large $v_1$}](b) at (2,3) {};
    \node[label=east:{\large $w_1$}] (c) at (6,3) {};

    \node[label=west:{\large $v_2$}](e) at (8,3){};
    \node[label=east:{\large $w_2$}](f) at (12,3){};

    \tikzstyle{every node} = [draw = none,fill = none, scale = 1.5]
    \node (g) at (3,5) {$G$};
    \node(h) at (9,5) {$G'$};

    \tikzstyle{every node} = [draw = none,fill = none,scale = .7]

    \draw (b) .. controls (4,1) .. (c)
         node[pos = .2,sloped,above]{1}
         node[pos = .5,sloped,above]{$a_1$}
         node[pos = .8,sloped,above]{1};
    \draw (b) .. controls (4,2) .. (c)
         node[pos = .2,sloped,above]{2}
         node[pos = .5,sloped,above]{$a_2$}
         node[pos = .8,sloped,above]{3};
    \draw (b) .. controls (4,3) .. (c)
         node[pos = .2,sloped,above]{3}
         node[pos = .5,sloped,above]{$a_3$}
         node[pos = .8,sloped,above]{2};
    \draw (b) .. controls (4,4) .. (c)
         node[pos = .2,sloped,above]{4}
         node[pos = .5,sloped,above]{$a_4$}
         node[pos = .8,sloped,above]{5};
    \draw (b) .. controls (4,5) .. (c)
         node[pos = .2,sloped,above]{5}
         node[pos = .5,sloped,above]{$a_5$}
         node[pos = .8,sloped,above]{4};

    \draw (e) .. controls (10,1) .. (f)
         node[pos = .2,sloped,above]{1}
         node[pos = .5,sloped,above]{$b_1$}
         node[pos = .8,sloped,above]{1};
    \draw (e) .. controls (10,2) .. (f)
         node[pos = .2,sloped,above]{2}
         node[pos = .5,sloped,above]{$b_4$}
         node[pos = .8,sloped,above]{4};
    \draw (e) .. controls (10,3) .. (f)
         node[pos = .2,sloped,above]{3}
         node[pos = .5,sloped,above]{$b_2$}
         node[pos = .8,sloped,above]{5};
    \draw (e) .. controls (10,4) .. (f)
         node[pos = .2,sloped,above]{4}
         node[pos = .5,sloped,above]{$b_5$}
         node[pos = .8,sloped,above]{2};
    \draw (e) .. controls (10,5) .. (f)
         node[pos = .2,sloped,above]{5}
         node[pos = .5,sloped,above]{$b_3$}
         node[pos = .8,sloped,above]{3};
\end{tikzpicture}
\caption{Two ribbon graphs with the same sandpile torsor structure but different genus.} 
\label{counter1}
\end{center}
\end{figure}

With this formula in mind, we can construct a pair of ribbon graphs that prove Theorem~\ref{question1}. Consider 2 ribbon graphs, $\rib$ and $\ribs$, such that $V(G) = \{v_1,w_1\}$, $V(G') = \{v_2,w_2\}$, and $|E(G)| = |E(G')| = 5$ (see Figure \ref{counter1}). Furthermore, label the edges of $G$ as $a_1$ through $a_5$ such that $\rho_{v_1} = (a_1,a_2,a_3,a_4,a_5)$ and $\rho_{w_1} = (a_1,a_3,a_2,a_5,a_4)$ and label the edges of $G'$ such that $\rho_{v_2} = (b_1,b_4,b_2,b_5,b_3)$ and $\rho_{w_2} = (b_1,b_5,b_3,b_4,b_2)$ (again, see Figure \ref{counter1}). Finally, let $\varphi$ be the map that sends $a_i$ to $b_i$ for each $i$ and $\gamma$ be the map that doubles the number of chips at each vertex. Note that $\gamma$ is an isomorphism because $\text{Pic}^0(G) \cong \Z/5\Z$.  

\begin{prop}\label{prop:counter1}
Let $\rib$, $\ribs$, $\gamma$, and $\vp$ be constructed as above and identify $v_1$ with $v_2$ and $w_1$ with $w_2$. For every vertex $v \in V(G)$ the following diagram commutes, where $\alpha_v$ is the rotor routing process $r$ or the Bernardi process $\beta$ with basepoint $v$:

\[ \begin{tikzcd}[column sep = 4cm]
\text{Pic}^0(G) \times \Tau(G) \arrow{r}{\alpha_v(\text{Pic}^0(G))} \arrow[swap]{d}{\gamma\times \vp} & \Tau(G) \arrow{d}{\vp} \\
\text{Pic}^0(G') \times \Tau(G') \arrow{r}{\alpha_v(\text{Pic}^0(G'))}& \Tau(G')
\end{tikzcd}
\]\\

\noindent However, the genus of $\rib$ is $2$ while the genus of $\ribs$ is $1$.\\
\end{prop}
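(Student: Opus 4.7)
The plan is to handle the two assertions of Proposition~\ref{prop:counter1} separately: first the genus computation, then the commutativity of the diagram.

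For the genus claim, the tool is Proposition~\ref{kauf}. Since $|V(G)| = |V(G')| = 2$ and $|E(G)| = |E(G')| = 5$, this reduces to computing $\text{cyc}\rib$ and $\text{cyc}\ribs$, because $2g = 5 - \text{cyc}\rib$ and $2g' = 5 - \text{cyc}\ribs$. I would enumerate the face cycles directly from the cyclic orders displayed in Figure~\ref{counter1}: starting at a half-edge, traverse the edge to the opposite vertex and then advance to the next edge in the cyclic order there, iterating until returning to the starting half-edge; this partitions the $10$ half-edges into face cycles. The cyclic orders are rigged so that one of the ribbon graphs yields a single face cycle (genus $2$) while the other yields three face cycles (genus $1$), establishing the genus discrepancy.

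For the commutative diagram, the key reduction is Lemma~\ref{algfun}: since $\text{Pic}^0(G) \cong \Z/5\Z$ is cyclic, commutativity need only be verified on a single generator. A convenient choice is the class $h$ represented by $w_1 - v_1$, so that $\gamma(h)$ is represented by $2(w_2 - v_2)$. For each basepoint $v \in \{v_1, w_1\}$, each spanning tree $a_i$ ($i = 1,\dots,5$), and each torsor algorithm $\alpha \in \{r, \beta\}$, the verification becomes an explicit finite check: compute $\alpha_v(h, a_i)$ and $\alpha_v(\gamma(h), b_i)$, and confirm that they agree under $\vp: a_i \mapsto b_i$.

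Concretely, rotor routing with basepoint $v_1$ applied to $h$ simply rotates the rotor at $w_1$ one step forward from $a_i$ in $\rho_{w_1}$; rotor routing with basepoint $v_2$ applied to $\gamma(h)$ rotates the rotor at $w_2$ two steps forward from $b_i$ in $\rho_{w_2}$. The cyclic orders in Figure~\ref{counter1} are engineered so that ``one step in $\rho_{w_1}$'' on $\{a_i\}$ corresponds under $\vp$ to ``two steps in $\rho_{w_2}$'' on $\{b_i\}$, and symmetrically when the basepoint is $w_1$ and the rotor lives at $v_1$. Once these two permutation identities are verified, rotor routing commutativity follows from Lemma~\ref{algfun}. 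The Bernardi case follows the same outline but additionally requires computing the break divisor of each spanning tree via the recursive half-edge procedure of Section~\ref{Torsors}, then adding $h$ (resp.\ $\gamma(h)$) and reading off the resulting tree; the equality with $\vp$ applied to the $G$-side answer is then a tree-by-tree comparison.

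The main obstacle will be the Bernardi verification: it demands careful execution of the break-divisor tracing for five spanning trees on each side, at each of two basepoints. The rotor routing check, by contrast, collapses to a one-line permutation identity after invoking Lemma~\ref{algfun}. The essential structural feature of the construction is that $\gamma$ is the doubling map rather than the identity; this ``reparametrization'' of the action of $\text{Pic}^0$ is precisely the extra flexibility that allows $\rib$ and $\ribs$ to realize equivalent torsors while sitting on topologically distinct surfaces.
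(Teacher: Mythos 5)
Your proposal is correct and follows the same skeleton as the paper: face-cycle enumeration plus Proposition~\ref{kauf} for the genera, and Lemma~\ref{algfun} to reduce the commutativity check to a single generator acting on each of the five spanning trees, where rotor routing becomes exactly the permutation identity you describe (one step of $\rho_{w_1}$ corresponds under $\vp$ to two steps of $\rho_{w_2}$, and symmetrically at the other basepoint with $\rho_{v_1}$ and $\rho_{v_2}$); these identities do hold for the cyclic orders in Figure~\ref{counter1}. The one place you diverge is the Bernardi case: rather than running the break-divisor tracing for all trees at both basepoints, the paper observes that on a two-vertex ribbon graph $\beta_{v_i}=r_{w_i}$ and $\beta_{w_i}=r_{v_i}$ --- two free transitive actions of the cyclic group $\Z/5\Z$ coincide as soon as they agree on one generator, which is a four-line check --- so the Bernardi diagrams are literally the rotor routing diagrams with basepoints swapped and no further computation is needed. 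Your route is valid but carries all of the labor in the half of the argument that the paper's reduction makes free. One further remark: your hedge about which graph has which genus is warranted, since the paper is internally inconsistent here; the computation gives $\text{cyc}\rib=3$ and $\text{cyc}\ribs=1$, hence $g(G)=1$ and $g(G')=2$, which matches the body of the paper's proof but is the opposite of what the proposition statement asserts. Either way the genera differ, which is all that Theorem~\ref{question1} requires.
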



\begin{proof}

First, we observe that $\beta_{v_i} = r_{w_i}$ and $\beta_{w_i} = r_{v_i}$. To see this, it suffices to show that they match on a generator. 

\begin{align*}\beta_{v_1}(v_1-w_1,\{a_1,a_2,a_3,a_4,a_5\}) = \{a_2,a_3,&a_4,a_5,a_1\} = r_{w_1}(v_1-w_1,\{a_1,a_2,a_3,a_4,a_5\})\\
\beta_{v_2}(v_2-w_2,\{b_1,b_2,b_3,b_4,b_5\}) = \{b_4,b_5,&b_1,b_2,b_3\} = r_{w_2}(v_2-w_2,\{b_1,b_2,b_3,b_4,b_5\})\\
\beta_{w_1}(w_1-v_1,\{a_1,a_2,a_3,a_4,a_5\}) = \{a_3,a_5,&a_2,a_1,a_4\} = r_{v_1}(w_1-v_1,\{a_1,a_2,a_3,a_4,a_5\})\\
\beta_{w_2}(w_2-v_2,\{b_1,b_2,b_3,b_4,b_5\}) = \{b_5,b_1,&b_4,b_2,b_3\} = r_{v_2}(w_2-v_2,\{b_1,b_2,b_3,b_4,b_5\})\\
\end{align*}

Therefore, we only need to prove the result for the rotor routing torsors. Furthermore, by Lemma~\ref{algfun}, it suffices to check a generator of $\text{Pic}^0(G)$ (and we do not have to choose the same generator for $r_v$ as for $r_w$). 

Using $v_i-w_i$ as our generator, we get the following diagram for $r_w$:

\[ \begin{tikzcd}[column sep = 4cm]
(v_i-w_i,\{a_1,a_2,a_3,a_4,a_5\}) \arrow{r}{r_{w_i}(\text{Pic}^0(G))} \arrow[swap]{d}{\gamma\times \vp} & \{a_2,a_3,a_4,a_5,a_1\} \arrow{d}{\vp} \\
(2v_i-2w_i,\{b_1,b_2,b_3,b_4,b_5\}) \arrow{r}{r_{w_i}(\text{Pic}^0(G'))}& \{b_2,b_3,b_4,b_5,b_1\}
\end{tikzcd}
\]\\

Using $w_i-v_i$ as our generator, we get the following diagram for $r_v$:

\[ \begin{tikzcd}[column sep = 4cm]
(w_i-v_i,\{a_1,a_2,a_3,a_4,a_5\}) \arrow{r}{r_{v_i}(\text{Pic}^0(G))} \arrow[swap]{d}{\gamma\times \vp} & \{a_3,a_5,a_2,a_1,a_4\} \arrow{d}{\vp} \\
(2w_i-2v_i,\{b_1,b_2,b_3,b_4,b_5\}) \arrow{r}{r_{v_i}(\text{Pic}^0(G'))}& \{b_3,b_5,b_2,b_1,b_4\}
\end{tikzcd}
\]\\

Finally, we find from direct computation that cyc$\rib = 3$ while cyc$\ribs = 1$. By Proposition \ref{kauf}, this means that the genus of $\rib$ is $1$ while the genus of $\ribs$ is $2$. \end{proof}

\subsection{Restricted $\gamma$ (Theorem~\ref{question2})}\label{c2}
For any $G$ and $G'$ on the same set of vertices, the identity map on $V(G)$ induces a natural isomorphism from $\text{Div}^0(G) \to \text{Div}^0(G')$. However, this isomorphism does not always induce an isomorphism from $\text{Pic}^0(G) \to \text{Pic}^0(G')$ because it is possible that two chip configurations will be firing equivalent on $G$ but not $G'$ (or vice versa). Nevertheless, for certain graphs, we can find natural isomorphisms with respect to appropriate subsets of vertices. 

Let $G$ and $G'$ be two graphs on the same set of vertices. Furthermore, suppose that there is some $V_{gen}\subset V(G)$ satisfying the following properties:
\begin{itemize}
    \item Every element of either $\text{Pic}^0(G)$ and $\text{Pic}^0(G')$ can be written as a linear combination of vertices in $V_{gen}$. In other words, any chip configuration is firing equivalent to one with no chips on vertices outside of $V_{gen}$.
    \item Two chip configurations with no chips outside of $V_{gen}$ are firing equivalent in $G$ if and only if they are firing equivalent on $G'$.
\end{itemize}

Then, let $\hat \gamma$ be a map from $\text{Pic}^0(G) \to \text{Pic}^0(G')$ that we get from the following procedure. Given $S \in \text{Pic}^0(G)$, we first choose a representative for $S$ with no chips outside of $V_{gen}$, which exists by Property 1. Then, we let $\hat \gamma(S)$ be the equivalence class of $\text{Pic}^0(G')$ containing $\text{Id}(S)$ where Id is the map from $\text{Div}^0(G)\to \text{Div}^0(G')$ induced by the identity on $V(G)$. By the second property, we have the following: 

\begin{lem}\label{vgen}
$\hat \gamma$ is a well-defined isomorphism.
\end{lem}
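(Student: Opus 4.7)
The plan is to check in turn that $\hat{\gamma}$ is well-defined, is a group homomorphism, and is a bijection. Each of these four checks will follow almost mechanically from one direction of Property 1 or Property 2, together with the trivial observation that $\text{Id}: \text{Div}^0(G) \to \text{Div}^0(G')$ is a degree-preserving additive bijection (since $V(G) = V(G')$).

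For well-definedness, suppose $D_1, D_2 \in \text{Div}^0(G)$ both represent the same class $S \in \text{Pic}^0(G)$ and are both supported on $V_{gen}$. By Lemma~\ref{lem1} they are firing equivalent on $G$, and since both have no chips outside $V_{gen}$, Property 2 gives that $\text{Id}(D_1)$ and $\text{Id}(D_2)$ are firing equivalent on $G'$, hence represent the same class in $\text{Pic}^0(G')$. Additivity then follows because, if $D, D'$ are representatives of $S, S' \in \text{Pic}^0(G)$ chosen supported on $V_{gen}$ (which exist by Property 1 applied to $G$), then $D + D'$ is a representative of $S + S'$ still supported on $V_{gen}$, and the identity on divisors respects sums, so $\hat{\gamma}(S + S') = \hat{\gamma}(S) + \hat{\gamma}(S')$.

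For bijectivity, surjectivity uses the symmetric half of Property 1 applied to $G'$: any $S' \in \text{Pic}^0(G')$ has a representative $D'$ supported on $V_{gen}$, and then $\text{Id}^{-1}(D') \in \text{Div}^0(G)$ defines a class $S$ with $\hat{\gamma}(S) = S'$. Injectivity invokes the reverse direction of Property 2: if $\hat{\gamma}(S) = 0$, pick a representative $D$ of $S$ supported on $V_{gen}$; then $\text{Id}(D)$ and the zero divisor are both supported on $V_{gen}$ and firing equivalent on $G'$, so Property 2 forces $D$ to be firing equivalent to zero on $G$, giving $S = 0$.

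There is no real obstacle here; the lemma is essentially a bookkeeping exercise in which the two hypotheses on $V_{gen}$ have been set up to supply exactly the four facts required. The only step that demands a bit of care is tracking which hypothesis is used where: Property 1 must be invoked for both $G$ (to define $\hat{\gamma}$ and to prove additivity) and $G'$ (to prove surjectivity), and the \emph{if and only if} in Property 2 must be used in both directions, once for well-definedness and once for injectivity.
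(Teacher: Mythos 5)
Your proof is correct and follows the only natural route: the paper itself gives no written proof of Lemma~\ref{vgen}, simply asserting it as an immediate consequence of Properties 1 and 2, and your four checks (well-definedness and injectivity from the two directions of Property 2, additivity and surjectivity from Property 1 together with additivity of the identity map on divisors) are exactly the bookkeeping the paper leaves implicit.
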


If $\rib$ and $\ribs$ are ribbon graphs on two vertices with the same number of edges, then $G$ and $G'$ must be isomorphic because we do not allow loop edges. This means that we can take $V_{gen}=V(G)$ and this will always give us a isomorphism $\hat \gamma: \text{Pic}^0(G) \to \text{Pic}^0(G')$. Note that $\hat\gamma$ maintains the number of chips on each vertex while the $\gamma$ we used for Proposition~\ref{prop:counter1} doubles them, so this is not sufficent to prove Theorem~\ref{question2}. In fact, we show the following:

\begin{prop}\label{2good}
There are no examples of 2-vertex graphs satisfying Theorem~\ref{question2}.
\end{prop}

\begin{proof}
Let $V(G) = \{v_1,w_2\}$ and $V(G')= \{v_2,w_2\}$ where we identify $v_1$ with $v_2$  and $w_1$ with $w_2$, and also refer to them as $v_i$ and $w_i$ respectively (similarly to the notation used in the proof of Proposition~\ref{prop:counter1}). We can use $r_w$ and $r_v$ (or equivalently $\beta_v$ and $\beta_w$) to determine $\rho_{v_2}$ and $\rho_{w_2}$ in relation to $\rho_{v_1}$ and $\rho_{w_1}$. Then, we show that $cyc(\rho_{v_1}\cdot \rho_{w_1}) = cyc(\rho_{v_2}\cdot \rho_{w_2})$.

Label one of the edges of $G$ as $t_1$ and then for each $k \in [2,n]$, label $r_{w_1}((k-1)v - (k-1)w, t_1)$ as $t_k$. It follows by definition that $\rho_{v_1} = (t_1,....,t_n)$. Then, since $\hat \gamma$ is induced by the identity, for every $k$, we have $r_{w_2}((k-1)v-(k-1)w,\varphi(t_1))$ = $\varphi(t_k)$. Thus, $\rho_{v_2} = (\varphi(t_1)....,\varphi(t_n))$. 

Next, we define $\sigma \in S_n$ to be the permutation such that $\sigma(t_k) = r_{v_1}((k-1)w-(k-1)v,t_1)$. This means that $\rho_{w_1} = (\sigma(t_1),..,\sigma(t_n))$. By the same reasoning as above, it follows that $\rho_{w_2} = (\varphi(\sigma(t_1)),..,\varphi(\sigma(t_n)))$.

Finally, 

$$\rho_{v_2} \cdot \rho_{w_2} = (\varphi(t_1)....,\varphi(t_n))\cdot (\varphi(\sigma(t_1)),..,\varphi(\sigma(t_n))) = \varphi((t_1,...,t_n))\cdot \varphi((\sigma(t_1),..,\sigma(t_n)))=$$

$$= \varphi((t_1,...,t_n)\cdot (\sigma(t_1),..,\sigma(t_n))) = \varphi(\rho_{v_1}\cdot \rho_{w_1}).$$

$\varphi$ is a bijection, so it does not affect the number of cycles in the resulting product. This means that $\rib$ and $\ribs$ must have the same genus.
\end{proof}

\begin{figure}
\begin{center}
\begin{tikzpicture}
    
    \tikzstyle{every node} =  [circle,fill,inner sep=1pt,minimum size = 1.5mm]
    \node[label={north west:{\large $v_1$}}] (a) at (0,5) {};
    \node[label={north:{\large $z_1$}}] (b) at (3,5) {};
    \node[label={north east:{\large $w_1$}}] (c) at (6,5) {};

    \node[label={north west:{\large $v_2$}}](d) at (9,5){};
    \node[label={north:{\large $z_2$}}](e) at (11,5){};
    \node[label={north east:{\large $w_2$}}](f) at (15,5){};

    \tikzstyle{every node} = [draw = none,fill = none, scale = 1.5]
    \node (g) at (2,6.5) {$G$};
    \node(h) at (11,6.5) {$G'$};

    \tikzstyle{every node} = [draw = none,fill = none,scale = .7]
    \draw (a) .. controls(1.5,6).. (b)
         node[pos = .2,sloped,above]{1}
         node[pos = .8,sloped,above]{x + 1};
    \draw (a) .. controls (1.5,4) .. (b)
         node[pos = .2,sloped,below]{2}
         node[pos = .8,sloped,below]{x + 2};
    \draw (b) .. controls (4.5,3) .. (c)
         node[pos = .2,sloped,below]{1}
         node[pos = .8,sloped,below]{1};
    \draw (b) .. controls (4.5,4) .. (c)
         node[pos = .2,sloped,below]{2}
         node[pos = .8,sloped,below]{2};
    \draw[loosely dashed] (b) .. controls (4.5,5) .. (c);
    \draw (b) .. controls (4.5,6) .. (c)
         node[pos = .2,sloped,above]{x-1}
         node[pos = .8,sloped,above]{x-1};
    \draw (b) .. controls (4.5,7) .. (c)
         node[pos = .2,sloped,above]{x}
         node[pos = .8,sloped,above]{x};

    \draw (d) .. controls (10,5) .. (e)
         node[pos = .15,sloped,below]{1}
         node[pos = .8,sloped,below]{2x + 1};
    \draw (e) .. controls (13,2.5) .. (f)
         node[pos = .2,sloped,below]{1}
         node[pos = .8,sloped,below]{1};
    \draw (e) .. controls (13,3.5) .. (f)
         node[pos = .2,sloped,below]{2}
         node[pos = .8,sloped,below]{2};
    \draw[loosely dashed] (e) .. controls (13,4.5) .. (f);
    \draw[loosely dashed] (e) .. controls (13,5.5) .. (f);
    \draw (e) .. controls (13,6.5) .. (f)
         node[pos = .2,sloped,above]{2x-1}
         node[pos = .8,sloped,above]{2x-1};
    \draw (e) .. controls (13,7.5) .. (f)
         node[pos = .2,sloped,above]{2x}
         node[pos = .8,sloped,above]{2x};
\end{tikzpicture}
\caption{Two ribbon graphs with the same rotor routing/ Bernardi torsors but different genus} 
\label{twographs}
\end{center}
\end{figure}
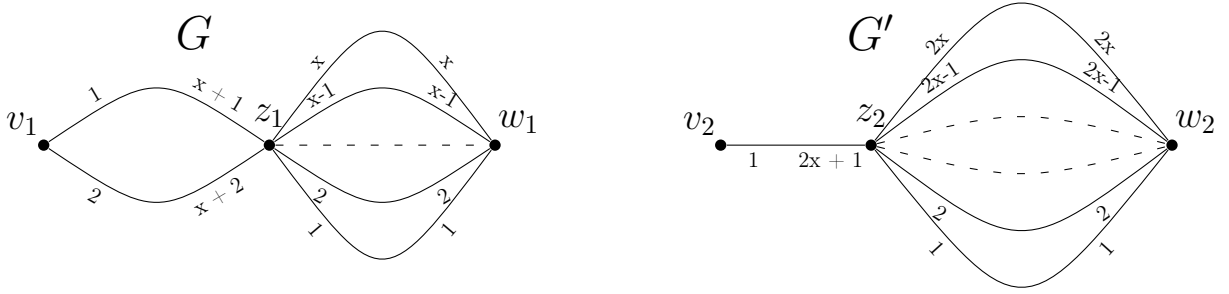

Proposition~\ref{2good} says that in order to prove Theorem~\ref{question2}, we will need to work with ribbon graphs that have at least 3 vertices. Let $x$ be any odd integer. Consider two ribbon graphs, $\rib$ and $\ribs$ such that $|V(G)| = |V(G')| = 3$. Call the elements of $V(G)$ $v_1$, $z_1$, and $w_1$, and call the elements of $V(G')$ $v_2$, $z_2$, and $w_2$. Connect $v_1$ and $z_1$ with 2 edges, $z_1$ and $w_1$ with $x$ edges, $v_2$ and $z_2$ with 1 edge, and $z_2$ and $w_2$ with $2x$ edges (see Figure \ref{twographs}). For the cyclic ordering $\rho_{z_1}$, set the 2 edges that connect to $v_1$ to be next to each other. Furthermore, set the cyclic order of edges connecting $z_1$ to $w_1$ to be the same for $\rho_{z_1}$ as $\rho_{w_1}$, and likewise, set the cyclic ordering of edges connecting $z_2$ to $w_2$ to be the same for $\rho'_{z_2}$ as $\rho'_{w_2}$ (again see Figure \ref{twographs}).  

\begin{thm}\label{ugly}
For any $g\in \Z_{>0}$, let $\rib$ and $\ribs$ be constructed as above with $x = 2g+1$. If we identify the vertices of $G$ with the vertices of $G'$, then $\text{Pic}^0(G) \cong \text{Pic}^0(G')$ and $\{v_i,w_i\}$ the $V_{gen}$ requirements of Lemma~\ref{vgen}. Furthermore, the diagram in Theorem~\ref{question2} commutes. However, the genus of $\rib$ is $g$ while the genus of $\ribs$ is $2g$.\\
\end{thm}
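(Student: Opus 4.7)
The plan is to handle the three assertions of the theorem in turn: the $\text{Pic}^0$-isomorphism together with the $V_{gen}$ property, the computation of the two genera, and finally the commutativity of the torsor diagrams.

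\textbf{Sandpile group and $V_{gen}$.} The hypothesis that $x$ is odd is the key. In $G$ the three fire-vectors have $z$-coordinates $-2,\,-(x+2),\,x$, whose $\gcd$ is $\gcd(2,x)=1$; in $G'$ the $z$-coordinates are $-1,\,-(2x+1),\,2x$ with $\gcd$ $1$. Hence some integer combination of firings annihilates the $z$-coordinate of any divisor, giving Property~1 of Lemma~\ref{vgen} with $V_{gen}=\{v_i,w_i\}$ in both graphs. For Property~2 I solve, in each graph, when a divisor of the form $(c,0,-c)$ (supported on $V_{gen}$) lies in $\operatorname{im}\Delta$. A short linear calculation shows the answer is $c\in 2x\Z$ in both $G$ and $G'$, so the identity on $V_{gen}$ induces the same equivalence on these divisors in both graphs and therefore gives a well-defined isomorphism $\hat\gamma:\text{Pic}^0(G)\xrightarrow{\sim}\text{Pic}^0(G')\cong\Z/2x\Z$.

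\textbf{Genera.} With $|V|=3$ in both cases, Proposition~\ref{kauf} reduces to $2g=|E|-1-\text{cyc}$. I will trace the faces directly from the cyclic orders prescribed in Figure~\ref{twographs}. The two $v_i z_i$-edges being consecutive in $\rho_{z_1}$ and the $z_i$-to-$w_i$ bundles having matching orders at their two endpoints forces exactly two cycles in each graph: one ``outer'' face that sweeps around the entire graph and one ``small'' face. Substituting $\text{cyc}(G,\rho)=\text{cyc}(G',\rho')=2$ with $x=2g+1$ yields $g(G)=(x-1)/2=g$ and $g(G')=(2x-2)/2=2g$.

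\textbf{Commutativity of the torsor diagrams.} This is where the real work lies. By Lemma~\ref{algfun} it suffices to test the diagram on one generator of $\Z/2x\Z$ per basepoint. I will choose the generator $w_i-v_i$ when convenient and define $\varphi:\Tau(G)\to\Tau(G')$ by transporting a base spanning tree $T_0$ along orbits of a fixed rotor-routing generator, so that compatibility with $r_{v_i}$ is automatic. The remaining checks are: (a) intertwining with $r_{w_i}$ and $r_{z_i}$, which I expect to reduce to ``cyclic shift of a single $z$-to-$w$ edge'' thanks to the matched orders at $z_i$ and $w_i$; and (b) intertwining with the three Bernardi actions $\beta_{v_i},\beta_{w_i},\beta_{z_i}$. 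For (b) I anticipate, as in Proposition~\ref{prop:counter1}, that the symmetry built into Figure~\ref{twographs} forces a clean coincidence like $\beta_{v_i}=r_{w_i}$ after a suitable power; once one such identity is verified on a generator, Lemma~\ref{algfun} promotes it to equality of actions and the Bernardi diagrams follow.

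\textbf{Main obstacle.} The commutativity step is the principal difficulty: it requires simultaneously tracking rotor motions (and Bernardi tours) on both graphs and verifying that the natural pairing of $z$-to-$w$ edges between $G$ and $G'$ respects the action of every generator at every basepoint. Apart from that, Steps~1 and~2 are short calculations, so the proof hinges on exploiting the carefully engineered symmetry of Figure~\ref{twographs} to reduce the torsor computations to visibly commuting cyclic shifts.
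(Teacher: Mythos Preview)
Your plan is sound and matches the paper's approach: verify $\text{Pic}^0\cong\Z/2x\Z$ together with the $V_{gen}$ conditions (the paper does this by exhibiting the full rotor-routing orbit of $v_i-w_i$ rather than via your Laplacian calculation, but the content is the same), count faces to get the genera, and use Lemma~\ref{algfun} to reduce each diagram to a single generator, eventually establishing $\beta_{v_i}=r_{w_i}$ and $\beta_{w_i}=r_{v_i}$ on both graphs.

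The one substantive tool the paper has that you do not anticipate is for the basepoint $z_i$. Rather than computing $r_{z_i}$ and $\beta_{z_i}$ directly as you propose, the paper first proves a standalone lemma (Lemma~\ref{useful}): if the ribbon subgraph on all simple $v$--$w$ paths is planar and its edges incident to $v$ (resp.\ $w$) are contiguous in $\rho_v$ (resp.\ $\rho_w$), then $\alpha_v=\alpha_w$ for both rotor routing and Bernardi. Since the $v_i$--$z_i$ portion is a planar double edge in $G$ and a single edge in $G'$, this instantly gives $\alpha_{z_i}=\alpha_{v_i}$ in both graphs, so the $z_i$ diagrams follow for free from the $v_i$ ones. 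Your direct-computation plan for $z_i$ would also work, but the lemma is a cleaner and reusable shortcut.
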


Note that if $x$ is even, then this theorem does not hold. In particular, $\text{Pic}^0(G)\cong \Z/2\Z \oplus \Z/x\Z$ and $\text{Pic}^0(G') \cong \Z/2x\Z$. These two groups are only isomorphic if $x$ is odd. 

Before we prove the theorem, we prove a lemma which gives a sufficient condition for two specific basepoints to be equivalent with respect to either of our sandpile torsor algorithms. In particular, when we apply this lemma to the ribbon graphs in Figure~\ref{twographs}, we find that the rotor routing process is the same with basepoint $v_i$ as with $z_i$ (where $i$ is either 1 or 2) and the same is true for the Bernardi process.

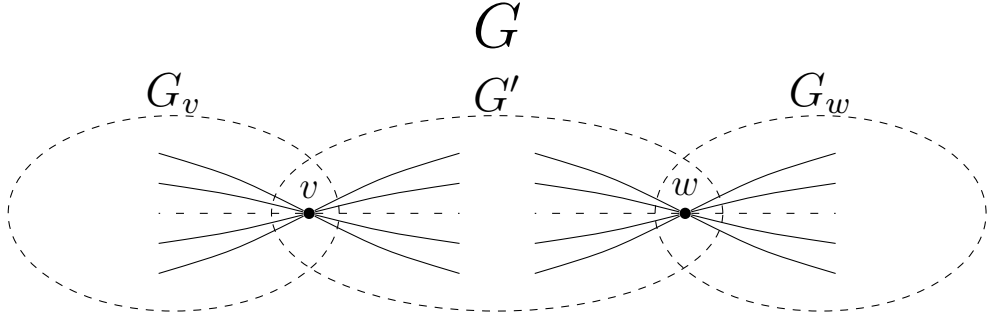
\begin{figure}
\begin{center}
\begin{tikzpicture}
    
    \tikzstyle{every node} = [circle,fill,inner sep=1pt,minimum size = 1.5mm]
    \node[label= north:{\large $v$ }] (a) at (4,4) {};
    \node[label= north:{\large $w$ }](b) at (9,4) {};

    \tikzstyle{every node} = [draw = none,fill = none, scale = 1.5]
    \node (c) at (2.2,5.6) {$G_v$};
    \node(d) at (6.5,5.6) {$G'$};
    \node(e) at (10.8,5.6) {$G_w$};
    \tikzstyle{every node} = [draw = none,fill = none, scale = 2]
    \node(f) at (6.5,6.5) {$G$};

    \tikzstyle{every node} = [draw = none,fill = none,scale = .7]
    \draw (a)  .. controls (3,3.5) .. (2,3.2);
    \draw (a)  .. controls (3,3.75) .. (2,3.6);
    \draw [loosely dashed](a) .. controls (3,4) .. (2,4);
    \draw (a) .. controls (3,4.25) .. (2,4.4);
    \draw (a) .. controls (3,4.5) .. (2,4.8);

    \draw[dashed] (2.2,4) ellipse (2.2 and 1.3);

    \draw (a)  .. controls (5,3.5) .. (6,3.2);
    \draw (a)  .. controls (5,3.75) .. (6,3.6);
    \draw [loosely dashed](a) .. controls (5,4) .. (6,4);
    \draw (a) .. controls (5,4.25) .. (6,4.4);
    \draw (a) .. controls (5,4.5) .. (6,4.8);

    \draw[dashed] (6.5,4) ellipse (3 and 1.3);

    \draw (b)  .. controls (8,3.5) .. (7,3.2);
    \draw (b)  .. controls (8,3.75) .. (7,3.6);
    \draw [loosely dashed](b) .. controls (8,4) .. (7,4);
    \draw (b) .. controls (8,4.25) .. (7,4.4);
    \draw (b) .. controls (8,4.5) .. (7,4.8);

    \draw[dashed] (10.8,4) ellipse (2.2 and 1.3);

    \draw (b)  .. controls (10,3.5) .. (11,3.2);
    \draw (b)  .. controls (10,3.75) .. (11,3.6);
    \draw [loosely dashed](b) .. controls (10,4) .. (11,4);
    \draw (b) .. controls (10,4.25) .. (11,4.4);
    \draw (b) .. controls (10,4.5) .. (11,4.8);

\end{tikzpicture}
\caption{The Rotor Routing/ Bernardi torsors at $v$ and $w$ are the same on $\rib$ when $\ribs$ is planar} \label{planarish}
\end{center}

\end{figure}

Let $\rib$ be a ribbon graph and let $v,w \in V(G)$. We split $G$ into 3 subgraphs labeled $G'$, $G_v$, and $G_w$ using the following construction (which is given in Figure \ref{planarish}): 

For any $e \in E(G)$,  
\begin{itemize}
    \item If every path from $e$ to $w$ passes through $v$, $e \in G_v$.
    \item If every path from $e$ to $v$ passes through $w$, $e \in G_w$.
    \item If neither above condition is met, $e \in G'$. 
\end{itemize}

When including an edge in any of these subgraphs, we also include both incident vertices. Furthermore, we always require $v \in G_v$ and $w \in G_w$, even when $G_v$ or $G_w$ contains no edges. It is immediate that $G_v \cap G' = \{v\}$ and $G_w \cap G' = \{w\}$. Let $\ribs$ be the restriction of $\rib$ to $G'$.

\begin{lem}
\label{useful}
For a ribbon graph $\rib$ with $v,w \in V(G)$, construct $\ribs$ as above. Let the following conditions hold:
 \begin{itemize}
     \item $\ribs$ is planar. 
     \item The edges of $G'$ that are incident to $v$ are all sequential in $\rho_v$.
     \item The edges of $G'$ that are incident to $w$ are all sequential in $\rho_w$.
 \end{itemize}
Then, $\alpha_v$ and $\alpha_w$ are equivalent sandpile torsors when $\alpha$ is replaced by either $r$ or $\beta$. 
\end{lem}

\begin{proof} 
First, we consider the case where $\alpha$ is the rotor routing process. Let $S$ be any element of $\text{Div}^0(G)$ that has a nonnegative number of chips away from $v$. Let $S'$ be an element of $\text{Div}^0(G)$ that is equivalent to $S$ as an element of $\text{Pic}^0(G)$ and has a nonnegative number of chips away from $w$. 

We need to show that for any spanning tree $T \in \Tau(G)$, we have $r_v(S,T) = r_w(S',T)$. We can begin our evaluation of each of these rotor routing torsors by performing rotor routing on $G_v$ and $G_w$ until all vertices in $G \setminus G'$ have no chips on them. Because $S$ and $S'$ are in the same sandpile equivalence class, and because the rotors in $G_v$ and $G_w$ will always point towards $v$ and $w$ respectively, the resulting portions of the spanning tree outside of $G'$ is the same with either basepoint. Furthermore, if a chip ever leaves $G'$ during rotor routing (say WLOG that it enters $G_v$), then this happens because the rotor at $v$ rotates into $G_v$. For the chip to return to $G'$ (which must happen eventually), the rotor at $v$ must keep spinning until it returns to an edge in $G'$. This drops one chip across each edge in $G_v$ incident to $v$. The effect of this rotation is the same as firing $v$ in the subgraph $G_v$ which has no effect on the resulting tree. By Theorem \ref{bigone}, we know that $r_v = r_w$ when we restrict to $\ribs$ and the above analysis shows that this is also true on $\rib$.

The Bernardi action is even simpler. If we start each tour with the first edge in $G'$ connected to the basepoint vertex, then the tours will go around $G_v$ and $G_w$ in the same direction. Thus, the effect of these subgraphs on the break divisors will be the same for each basepoint vertex. Because the two Bernardi actions are the same on $G'$ and we alter each of them in the same way, they are also the same on $G$.\end{proof}

Now we are ready to prove the Theorem \ref{ugly}.

\begin{proof}[Proof of Theorem \ref{ugly}] For each ribbon graph, there are $2x$ spanning trees, which means that this is also the size of the sandpile groups. We claim that the sandpile element $v_i - w_i$ has order $2x$ in both $\text{Pic}^0(G)$ and $\text{Pic}^0(G')$. This means that it must generate the sandpile group for both graphs. Furthermore, since there are no chips on $z_i$, the pair $\{v_i,w_i\}$ satisfies the $V_{gen}$ requirements of Lemma~\ref{vgen}. 

Label the spanning trees of $G_1$ as $[a,b]$ where $a$ is the index of the edge between $v_1$ and $z_1$ (either 1 or 2) and $b$ is the index of the edge between $z_1$ and $w_1$ in cyclic order (ranging from 1 to $x$). Label the spanning trees of $G_2$ as $[a]$ with $a$ the index of the edge between $z_2$ and $w_2$ (ranging from 1 to $2x$). Our claim follows if we show that 

\begin{equation}\label{trees1}
\{[1,1], r_{w_1}(1,0,-1)[1,1],  r_{w_1}(2,0,-2)[1,1],...,r_{w_1}(2x-1,0,1-2x)[1,1]\}
\end{equation}

\noindent are all distinct spanning trees of $G_1$ and

\begin{equation}\label{trees2}
\{[1], r_{w_2}(1,0,-1)[1],  r_{w_2}(2,0,-2)[1],...,r_{w_2}(2x-1,0,1-2x)[1]\}\end{equation}

\noindent are all distinct spanning trees of $G_2$ (where we could replace $r_{w_i}$ with any other sandpile torsor). 

On $G_1$, $r_{w_1}(1,0,-1)$ switches the edge between $v_1$ and $z_1$ and then shifts the edge between $z_1$ and $w_1$ up by 1. The only special case is when we get to the last edge between $z_1$ and $w_1$ and shift over to the edges between $v_1$ and $z_1$. However, this just causes the edge between $v_1$ and $z_1$ to shift twice which does not change it and then we get the first edge between $z_1$ and $w_1$ before depositing the chip at $w_1$. Thus, the trees given in~\ref{trees1} are:

$$\{[1,1], [2,2], [1,3], ..., [1,x], [2,1], ..., [2,x] \}.$$

On $G_2$, $r_{w_2}(1,0,-1)$ simply switches to the next edge between $z_2$ and $w_2$. Thus, the trees given in~\ref{trees2} are:

$$\{[1], [2], [3], ..., [2x-1], [2x] \}.$$

In both cases, we get $2x$ distinct trees. Additionally, this result, along with Lemma~\ref{algfun}, tells us that there is a unique bijection $\varphi:\Tau(G_1) \to \Tau(G_2)$ that will make the diagram from Theorem~\ref{question2} commute when our sandpile torsor is $r_{w_i}$. In particular, we let $\vp([a,b])= [b]$ when $a$ and $b$ have the same parity, and $\vp([a,b]) =[b+x]$ when $a$ and $b$ are of opposite parity. 

We now need to check that this same bijection will cause the diagram to commute when we replace $r_{w_i}$ with $r_{v_i}, \beta_{v_i}$, or $\beta_{w_i}$ (and by Lemma~\ref{algfun}, we only need to check on a generator).

By similar computation to above, we find that

$$\{[1,1], r_{v_1}(-1,0,1)[1,1],  r_{v_1}(-2,0,2)[1,1],...,r_{v_1}(1-2x,0,2x-1)[1,1]\}$$

\noindent is equal to

$$\{[1,1], [2,2], [1,3], ..., [1,x], [2,1], ..., [2,x] \}$$

\noindent while 

$$\{[1], r_{v_2}(-1,0,1)[1],  r_{v_2}(-2,0,2)[1],...,r_{v_2}(1-2x,0,2x-1)[1]\}$$

\noindent is equal to 

$$\{[1], [2], [3], ..., [2x-1], [2x] \}.$$

These trees occur in the same order that they did for $r_{w_i}$, so the same bijection holds.

Now, we look at the Bernardi torsors. On $G_1$, consider $\beta_{v_1}(1,0,-1)$. We will start the Bernardi tour on the first edge connecting $v_1$ to $z_1$. If this edge is part of our spanning tree, we will place one chip on $z_1$ when the tour reaches the other edge between $v_1$ and $z_1$. Otherwise, we place one chip at $v_1$ at the very beginning. Additionally, we place one chip on $z_1$ for each edge between $z_1$ and $w_1$ before the edge of our spanning tree, and one chip on $w_1$ for each edge between $z_1$ and $w_1$ after the edge of our spanning tree. Thus there are 2 cases:

If the spanning tree is $[1,k]$ for some $k$, then the break divisor is $(0,k,x-k)$. If the spanning tree is $[2,k]$ for some $k$, then the break divisor is $(1,k-1,x-k)$. In the first case, adding $(1,0,-1)$ gives $(1,k,x-k-1)$ which is the break divisor for $[2,k+1]$ (if $k = x$, we have the divisor $(1,x,-1)$ which is equal to $(1,0,x-1)$ after unfiring $w_1$ once. This is the break divisor for $[2,1]$). In the second case, adding $(1,0,-1)$ gives $(2,k-1,x-k-1)$. After firing $v_1$ once, we get $(0,k+1,x-k-1)$ which is the break divisor for $[1,k+1]$ (if $k = x$, we have the divisor $(0,x+1,-1)$ which is equal to $(0,1,x-1)$ after unfiring $w_1$ once. This is the break divisor for $[1,1]$.) This means that

$$\{[1,1], \beta_{v_1}(1,0,-1)[1,1],  \beta_{v_1}(2,0,-2)[1,1],...,\beta_{v_1}(2x-1,0,1-2x)[1,1]\}$$

\noindent is equal to

$$([1,1], [2,2], [1,3], ..., [1,x], [2,1], ..., [2,x] )$$

\noindent which is the same as the $r_{w_1}$ action. \\

The case for $\beta_{w_1}(-1,0,1)$ is completely similar and yields that

$$\{[1,1], \beta_{w_1}(-1,0,1)[1,1],  \beta_{w_1}(-2,0,2)[1,1],...,\beta_{w_1}(1-2x,0,2x-1)[1,1]\}$$

\noindent is equal to

$$\{[1,1], [2,2], [1,3], ..., [1,x], [2,1], ..., [2,x] \}$$

\noindent which is the same as $r_{v_1}$.\\

On $G_2$, because the edge between $v_2$ and $z_2$ is in every spanning tree, we can ignore it and look at the other two vertices.  On a two vertex graph, the rotor routing process at one basepoint produces the same tensor as the Bernardi process at the other basepoint. Thus, $\beta_{v_2} = r_{w_2}$ and $\beta_{w_2} = r_{v_2}$. Combined with our previous results that $\beta_{v_1} = r_{w_1}$ and $\beta_{w_1} = r_{v_1}$, we conclude that $\beta_{v_i} = r_{w_i}$ and $\beta_{w_i} = r_{v_i}$. This, the diagram commutes for either sandpile torsor algorithm.\\

The only thing left to show is that the genus of $G_1$ is $g$ while the genus of $G_2$ is $2g$. This is a direct application of Lemma~\ref{kauf}.\end{proof}

\section{Genus From Rotor Routing when the Graph is Known}\label{example}

In order to determine the genus of a ribbon graph, we need more information than just the rotor routing or Bernardi torsors. For this final section of the paper, we work with a ribbon graph $\rib$ for which $G$ is known, but $\rho$ is not. This alone is not enough to determine the genus of $\rib$, but we show that if we are also given the rotor routing action at each basepoint, we can calculate the genus. In other words, we prove Theorem~\ref{algorithm}. 

Our method of proof is to take an arbitrary vertex of our ribbon graph and show that the cyclic order of edges around it is essentially uniquely determined. Then, we can apply Proposition \ref{kauf} to determine the ribbon graph's genus.

\theoremstyle{definition}
\begin{definition}
Let $\rib$ be a ribbon graph and $v \in V(G)$. A \textit{$v$-component} of $\rib$ is the full ribbon subgraph induced on the vertices of a connected component of $G \setminus v$ union $\{v\}$.\end{definition}

Note that $\rib$ has multiple $v$-components if and only if $v$ is a cut vertex. Furthermore, the intersection of any two $v$-components is $v$. In Figure \ref{bigpic}, the lower ribbon graph is a $v$-component of the upper ribbon graph.

\begin{lem}\label{tree}
Let $\rib$ be a ribbon graph with a vertex $v$. Let $e_1$ and $e_2$ be two edges incident to $v$ in the same $v$-component, and let $w_1$ and $w_2$ be their other incident vertices respectively. There exists a spanning tree $T$ of $\rib$ such that: 
\begin{itemize}
    \item $e_1 \in T$,
    
    \item $e_2 \not\in T$, and
    
    \item the path from $w_2$ to $v$ using edges in $T$ passes through $w_1$.
\end{itemize}
\end{lem}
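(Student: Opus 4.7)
The plan is to build $T$ by a direct construction on the $v$-component containing $e_1$ and $e_2$, and then to extend arbitrarily to the rest of the graph. The key observation is that if we take care to make $e_1$ the unique $T$-edge joining $v$ to its own $v$-component, then every path in $T$ from any vertex of that component to $v$ is forced to terminate by traversing $e_1$, hence to pass through $w_1$.

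In more detail, let $C$ be the $v$-component of $(G,\rho)$ containing $e_1$ and $e_2$; by hypothesis $w_1$ and $w_2$ both lie in $C\setminus\{v\}$. By the definition of $v$-component, $C\setminus\{v\}$ is connected, so there is a simple path $P$ from $w_2$ to $w_1$ using only edges of $C$ that are not incident to $v$. Since $P$ is a tree (being a simple path) sitting inside the connected graph $C\setminus\{v\}$, I can extend it to a spanning tree $T_C$ of $C\setminus\{v\}$ (greedily add edges that do not create cycles until all vertices of $C\setminus\{v\}$ are reached).

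For every other $v$-component $C'$, fix any spanning tree $T_{C'}$ of $C'\setminus\{v\}$ and any edge $f_{C'}$ from $v$ into $C'$ (which exists because $G$ is connected). Set
\[
T \;=\; \{e_1\} \;\cup\; T_C \;\cup\; \bigcup_{C'\neq C}\bigl(\{f_{C'}\}\cup T_{C'}\bigr).
\]
This is a spanning tree of $G$: each summand is a tree on its own vertex set, the only shared vertex among summands is $v$, and together they span all of $V(G)$.

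Finally I verify the three conditions. Condition (i) is immediate. For (ii), note that the only edge of $T$ incident to $v$ that lies in $C$ is $e_1$, so $e_2\notin T$. For (iii), consider the unique $T$-path from $w_2$ to $v$. It starts in $C\setminus\{v\}$ and must leave $C\setminus\{v\}$ to reach $v$; the only $T$-edge that can be used to leave is $e_1$, so the path ends with $\cdots\to w_1\xrightarrow{e_1} v$, and therefore passes through $w_1$. I do not anticipate a real obstacle here; the only subtlety is making sure that $e_1$ is the unique $T$-edge from $v$ into $C$, which is exactly why the construction puts $T_C$ inside $C\setminus\{v\}$ rather than inside $C$.
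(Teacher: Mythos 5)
Your proof is correct, but it takes a more elaborate route than the paper's. The paper's argument is: take a simple path $P$ from $w_1$ to $w_2$ inside the $v$-component that avoids $v$ (existence follows from the same connectivity of $C\setminus\{v\}$ that you invoke), add $e_1$ to get a forest, and extend to an \emph{arbitrary} spanning tree $T$ of $G$; then (ii) holds because $e_2$ together with $e_1$ and $P$ would close a cycle, and (iii) holds because $P$ followed by $e_1$ is already the unique $T$-path from $w_2$ to $v$. You instead assemble $T$ component by component so that $e_1$ is the \emph{only} $T$-edge joining $v$ to its own $v$-component, and deduce (ii) and (iii) from that uniqueness rather than from a cycle argument. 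Both proofs rest on the same key fact, but yours does strictly more work for the same conclusion: once $P\cup\{e_1\}$ sits inside the forest, any completion at all satisfies all three conditions, so the careful componentwise construction is unnecessary --- and indeed the path $P$ you place inside $T_C$ is never used in your own verification, which relies only on $e_1$ being the unique $T$-edge from $v$ into $C$. What your version buys in exchange is a slightly stronger conclusion for free: in your tree, the $T$-path from \emph{every} vertex of $C\setminus\{v\}$ to $v$ passes through $w_1$, not just the path from $w_2$.
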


\begin{proof} By the definition of $v$-components, there is a path between $w_1$ and $w_2$ that does not pass through $v$. Because this path does not pass through $v$, adding $e_1$ to the path will not give us a cycle. Then, we expand to any spanning tree. This spanning tree must not contain $e_2$ or we would have a cycle, so all three conditions are met. \end{proof}

Consider $\rib$, $v$, $e_1$, $e_2$, $w_1$, and $w_2$ as given in Lemma~\ref{tree}. Let $T$ be a spanning tree satisfying the conditions of this lemma and $\ribs$ be the $v$-component containing $e_1$ and $e_2$. Let $T$ be a spanning tree satisfying the conditions of Lemma \ref{tree}, and let $T'$ be the restriction of $T$ to $G'$ (which  is a spanning tree of $G'$). 

Let $S \in \text{Div}^0(G)$ be the configuration that places 1 chip on $v$, $-1$ chips on $w_2$, and 0 chips elsewhere. Let $r_{w_2}$ be the rotor routing torsor on $\rib$ with basepoint $w_2$. Let $\hat T = r_{w_2}(S,T)$ and $\hat T'$ be the restriction of $\hat T$ to $E(G')$.

\begin{prop}\label{same}
Consider the construction above. The edge $e_2$ is directly after $e_1$ in $\rho'_v$ if and only if $\hat T' = T' \cup e_2 \setminus e_1$.
\end{prop}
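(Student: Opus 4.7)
The plan is to trace the rotor-routing action $r_{w_2}(v-w_2,T)$ explicitly. After orienting $T$ toward the basepoint $w_2$, condition (iii) of Lemma~\ref{tree} places the rotor at $v$ on $e_1$. The key structural reduction is that any chip sent from $v$ along an edge into a $v$-component $H\neq G'$ must eventually return to $v$ (since $H\setminus\{v\}$ is disconnected from $w_2$ in $G\setminus\{v\}$), and such excursions leave every rotor in $G'\setminus\{v\}$ untouched. Consequently $\hat T'$ coincides with the output of the ``effective'' rotor routing run purely inside $G'$, in which $v$'s rotor cycles through $\rho'_v$ starting at $e_1$.

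For the forward direction, if $e_2$ is the edge directly after $e_1$ in $\rho'_v$, one effective firing advances $v$'s rotor to $e_2$ and sends the chip directly to $w_2$, terminating the process. Since only $v$'s rotor in $G'$ has moved, $\hat T'=T'\cup e_2\setminus e_1$.

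For the backward direction, assume $\hat T'=T'\cup e_2\setminus e_1$, and let $e^*$ be the edge directly after $e_1$ in $\rho'_v$, with other endpoint $u$. If $u=w_2$, the effective rotor routing terminates after one firing with $v$'s rotor at $e^*$, so matching the hypothesis forces $e^*=e_2$ as required. The delicate case is $u\in V(G')\setminus\{v,w_2\}$, which I plan to exclude via chip conservation. Let $d\geq 2$ be the distance from $e_1$ to $e_2$ in $\rho'_v$, and for each $y\in V(G')\setminus\{w_2\}$ let $k_y\geq 0$ be the number of full rotor cycles at $y$ during the effective rotor routing, so that the total visit counts are $N_y=k_y\deg_{G'}(y)$ for $y\neq v$ and $N_v=k_v\deg_{G'}(v)+d$. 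For each $x\in V(G')\setminus\{v,w_2\}$ let $p_x$ count how many of the first $d$ edges of $\rho'_v$ after $e_1$ have their other endpoint equal to $x$. Chip conservation at $w_2$ (using that the $d$-th such edge is $e_2$ and the first $d-1$ cannot terminate the process early) collapses to $\sum_y k_y\,m_{G'}(y,w_2)=0$, which forces $k_y=0$ for every $y$ adjacent to $w_2$ in $G'$; in particular $k_v=0$, since $e_2$ makes $v$ adjacent to $w_2$. Chip conservation at any such adjacent $y\neq v$, together with $k_y=0$, then forces both $p_y=0$ and $k_z=0$ for every $G'$-neighbor $z\neq w_2$ of $y$. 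Iterating this propagation, and using that every component of $G'\setminus\{v,w_2\}$ contains a vertex adjacent to $w_2$ (a consequence of $G'\setminus\{v\}$ being connected), I obtain $k_y=0$ for every $y\in V(G')\setminus\{w_2\}$. Substituting back into chip conservation at $v$ yields $d-1=0$, contradicting $d\geq 2$. Hence $u=w_2$ and $e^*=e_2$, as desired.

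The main obstacle is precisely this last step: a priori nothing prevents the chip from disturbing each rotor in $G'\setminus\{v,w_2\}$ by exactly a multiple of that vertex's degree, so the rotors appear unchanged at the end. The combination of chip-conservation equations on $G'$ and the connectedness of $G'\setminus\{v\}$ (built into the definition of a $v$-component) is what rules out that scenario.
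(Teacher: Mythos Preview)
Your argument is correct, and it takes a genuinely different route from the paper's proof of the backward direction. After the same reduction to a single $v$-component, the paper proceeds algebraically: it writes down an explicit divisor $S'$ (placing $k+1$ chips at $v$ and negative chips at the other endpoints of the $k$ intermediate edges and $e_2$) for which $r_{w_2}(S',T)=T\cup e_2\setminus e_1$ holds by construction, invokes freeness of the rotor-routing torsor to conclude $S=S'$ in $\text{Pic}^0(G)$, and then shows $S-S'\not\equiv 0$ by a firing-propagation argument exploiting that the $v$-component has no cut at $v$. Your proof instead stays entirely inside the dynamics: you track the actual firing counts $N_y$ via chip conservation and propagate the vanishing of the cycle numbers $k_y$ through $G'\setminus\{v,w_2\}$, reaching the contradiction $d=1$ at $v$. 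The two propagation arguments are cousins---the paper shows no nontrivial firing vector kills $S-S'$, while you show the specific firing vector produced by rotor routing is the trivial one---but yours avoids the black-box appeal to freeness of the torsor, at the cost of slightly more bookkeeping.

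One point deserves a cleaner statement. Your parenthetical ``the first $d-1$ cannot terminate the process early'' is doing real work: what you need is that among the $N_v$ firings at $v$, at most one lands on an edge to $w_2$ (since any such firing immediately ends the process), and hence $k_v\,m_{G'}(v,w_2)+p_{w_2}\le 1$. Combined with $p_{w_2}\ge 1$ (from $e_2$), this gives both $p_{w_2}=1$ and $k_v=0$ simultaneously, without circularity. With that clarification the chip-conservation equation at $w_2$ really does collapse as you claim, and the rest of the propagation (using that $G'\setminus\{v\}$ is connected, so every component of $G'\setminus\{v,w_2\}$ meets the neighborhood of $w_2$) goes through.
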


\begin{proof} In the evaluation of $r_{w_2}(S, T)$, the single chip on $v$ travels around the graph until it reaches $w_2$. Whenever the chip enters a $v$-component other than $\ribs$, say $\ribss$, it remains in $\ribss$ until it returns to $v$. While the chip is in $\ribss$, it can only shift edges within $\ribss$. In particular, it will not affect $\hat T'$. After the chip has returned to $v$, it will move on to the next edge in the cyclic order around $v$, and the effect on $\hat T'$ will be the same as if the rotor had spun an extra time without sending the chip. Hence, it suffices to consider the case where $G$ has only one $v$-component.

After this simplification, the forward direction of the proof is immediate because if $e_2$ is the next edge after $e_1$ in the cyclic order around $v$, the rotor routing torsor will have a single step which exchanges $e_1$ for $e_2$ and then deposits the chip to $w_2$. The result is our desired tree.

For the other direction, we proceed by contradiction. Assume that the edges $a_1,..,a_k$ all fall between $e_1$ and $e_2$ in the cyclic order around $v$. Consider the configuration $S' \in \text{Div}^0(G)$ that places $k+1$ chips on $v$ and $-d_x$ chips on each other vertex $x$ where $d_x$ is the number of edges in $\{a_1,..,a_k,e_2\}$ that are incident to $x$. Then, the evaluation of $r_{w_2}(S', T)$ rotates the rotor at $v$ around $k+1$ times so that it is now at $e_2$. Thus, the resulting tree is $T' \cup e_2 \setminus e_1$. To establish our contradiction, we need to show that $r_{w_2}(S', T) \not= r_{w_2}(S, T)$. Because the rotor routing action is free and transitive, this statement reduces to showing that $S$ and $S'$ are not equivalent as elements of $\text{Pic}^0(G)$, which is the same as showing that $S - S'$ is not equivalent to the identity.

The configuration $S - S'$ has $-k$ chips on $v$ and $d_x$ chips on each other vertex $x$, where $d_x$ is the number of edges in $\{a_1,..,a_k\}$ that are incident to $x$. By Lemma \ref{lem1}, if $S-S'$ is equivalent to the identity, then we can get from this configuration to the configuration where there are no chips on the graph merely by firing vertices. Because firing a vertex is the only way to decrease the number of chips it has, every vertex that begins with chips must be fired. Additionally, any non-$v$ vertex adjacent to a fired vertex must be fired because it begins with no chips and gains a chip once the adjacent vertex has been fired. By recursion, this means that any vertex that is connected to a fired vertex by a path not passing through $v$ must fire. We assumed that every vertex is on the same $v$-component, so every non-$v$ vertex must fire. Additionally, since every edge in $E(G)$ is incident to a non-$v$ vertex, every edge must have a chip travel across it. Since there are at least $k+2$ edges incident to $v$, $v$ will eventually have a positive number of chips and must also fire. However, firing every vertex is equivalent to firing no vertices, meaning $S-S'$ must be the configuration where there are no chips. This is a contradiction because we assumed that there were edges between $e_1$ and $e_2$ \end{proof}

This proposition implies that on any cut-free ribbon graph $\rib$, given the necessary inputs for Theorem \ref{algorithm}, we can precisely calculate $\rho_{v_k}$ and thus, by Proposition \ref{kauf}, also the genus of $\rib$. However, knowing the restriction of $\rho_v$ to each $v$-component is not generally enough information to determine genus. We will also need information about when edges from one $v$-component fall between edges of a second $v$-component. This is the content of the next two lemmas.\\

Let $\rib$ be a ribbon graph with a vertex $v$. Let $e_1$ and $e_2$ be two sequential edges within a $v$-component, and $w_1$ and $w_2$ be their other incident vertices respectively. Consider a different $v$-component $\ribs$ such that $a_1,...,a_k$ are the edges in $E(G')$ that are between $e_1$ and $e_2$ in $\rho_v$. Let $T$ be a spanning tree satisfying the conditions of Lemma \ref{tree}, and $T'$ be the restriction of $T$ to $E(G')$. 

Let $S \in \text{Div}^0(G)$ be the configuration with $1$ chip on $v$, $-1$ chips on $w_2$, and 0 chips elsewhere. Additionally, let $r_{w_2}$ be the rotor routing action on $\rib$ with basepoint $w_2$, $\hat T = r_{w_2}(S, T)$, and $\hat T'$ be the restriction of $\hat T$ to $E(G')$. 

We compare the tree $\hat T'$ to a tree we obtain by restricting to $\ribs$ from the start. Let $S' \in \text{Div}^0(G')$ be the configuration with $-k$ chips on $v$ and $d_x$ chips on each other vertex $x\in V(G')$, where $d_x$ is the number of edges incident to $x$ in $\{a_1,...,a_k\}$. Finally, let $r'_{v}$ be the rotor routing torsor on $\ribs$ with basepoint $v$.

\begin{lem}\label{dif1}
In the construction above, $r'_{v}(S', T') = \hat{T'}$.
\end{lem}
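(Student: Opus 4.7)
The plan is to decompose the process $r_{w_2}(S,T)$ on $\rib$ into a sequence of chip excursions out of $v$, and to show that the restriction of these excursions to rotors at vertices of $V(G')\setminus\{v\}$ realizes, in a particular firing order, the process $r'_v(S',T')$ on $\ribs$. Because $T$ satisfies Lemma~\ref{tree} and the rotors are oriented toward the basepoint $w_2$, the initial rotor at $v$ is $e_1$ (the first edge on the unique $T$-path from $v$ to $w_2$, which goes via $w_1$). The chip starts at $v$, so each firing of $v$ rotates its rotor forward one step in $\rho_v$ and sends the chip along the new edge; the process terminates once this rotor reaches $e_2$ and the chip is dispatched across to $w_2$.

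Next I would classify the successive outgoing edges from $v$. Since $e_1$ and $e_2$ are consecutive within the $v$-component containing them, every edge strictly between them in $\rho_v$ belongs to a different $v$-component. When the chip leaves $v$ along such an edge $c$, it is trapped in the $v$-component of $c$ until it returns to $v$ (distinct $v$-components meet only at $v$), and all rotor updates during that sub-excursion occur at vertices of that single component. In particular, rotors at vertices of $V(G')\setminus\{v\}$ are modified only during the excursions corresponding to $c\in\{a_1,\dots,a_k\}$; for each such $a_i$, the chip enters $\ribs$ at its non-$v$ endpoint $x_i$ and performs pure rotor routing inside $\ribs$ until it reaches $v$. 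I would also verify that, since every edge incident to any $x\in V(G')\setminus\{v\}$ lies in $\ribs$, the initial rotor at $x$ used by the $G$-process (the first edge of the $T$-path from $x$ to $w_2$) coincides with the initial rotor used by $r'_v$ (the first edge of the $T'$-path from $x$ to $v$).

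Finally, I would invoke the abelian property of rotor routing (proved in~\cite{Holroyd}) applied to $\ribs$. Writing $S'=\sum_{i=1}^{k}([x_i]-[v])$, abelianness lets us evaluate $r'_v(S',T')$ by performing $k$ sequential unit-chip routings, from $x_1$ to $v$, then $x_2$ to $v$, and so on, each routing starting from the rotor configuration produced by the previous one. These $k$ routings coincide exactly with the $k$ $\ribs$-excursions of the $G$-process identified above, so the two procedures leave the rotors at $V(G')\setminus\{v\}$ in identical final states. Since these rotors alone determine the restriction of the resulting spanning tree to $E(G')$ (every edge incident to such a vertex is in $G'$), we conclude $\hat T'=r'_v(S',T')$. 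The crux is the bookkeeping that isolates the $\ribs$-contributions inside the global $G$-process; once that independence is made precise, the matching of initial rotors together with the abelian property finishes the proof.
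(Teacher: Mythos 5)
Your proof is correct and follows essentially the same route as the paper: decompose the rotation of the rotor at $v$ from $e_1$ to $e_2$ into excursions into other $v$-components, observe that only the excursions along $a_1,\dots,a_k$ touch rotors of $V(G')\setminus\{v\}$, identify each such excursion with a unit-chip routing from $b_i$ to $v$ inside $\ribs$, and sum these to get $S'$. You are somewhat more careful than the paper in verifying the initial rotor agreement and in explicitly invoking the abelian property to justify evaluating $r'_v(S',T')$ as $k$ sequential routings, but the underlying argument is the same.
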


\begin{proof} As the rotor at $v$ rotates from $e_1$ to $e_2$, it will pass through each of the edges $\{a_1,..,a_k\}$ once. By the same reasoning as discussed in the previous proof, any edges not in $E(G')$ that the rotor passes through will have no effect on $\hat T'$. Every time the rotor reaches edge $a_i$, one chip is transferred from $v$ to the other vertex incident to $a_i$ (call this vertex $b_i$). Then, the chip travels around in $\ribs$ until it returns to $v$. This has the same effect on the rotors in $\ribs$ as if we placed a single chip on $b_i$ and evaluated $r'_v$. Combining these single chip addition configurations gives us the element of the sandpile group $S'$. See Figure \ref{bigpic} \end{proof} 
\begin{figure}
\begin{center}
\begin{tikzpicture}[scale = 0.8]
    
    \tikzstyle{every node} = [circle,fill,inner sep=1pt,minimum size = 1.5mm]
    \node [label = {north east:{\large$v$}}](a) at (4,4) {};
    \node[label = {north:$w_1$}](b) at (2.5,4){};
    \node(c) at (3,5){};
    \node(d) at (4,5.5){};
    \node(e) at (5,5){};
    \node[label = {north:$w_2$}](f) at (5.5,4){};
    \node(g) at (5,3){};
    \node(h) at (4,2.5){};
    \node(i) at (3,3){};

   \node(j) at (5,2){};
   \node(k) at (2.5,2){};
   \node(l) at (7,3){};

   \node(m) at (4,6.5){};
   \node(n) at (7,5){};

    \node[label = {north east:{\large$v$}}](a2) at (12,4) {};
    \node(b2) at (10.5,4){} ;
    \node (c2) at (11,5){};
    \node(d2) at (12,5.5){};
    \node(e2) at (13,5){};
    \node(f2) at (13.5,4){};
    \node(g2) at (13,3){};
    \node(h2) at (12,2.5){};
    \node(i2) at (11,3){};

   \node(j2) at (13,2){};
   \node(k2) at (10.5,2){};
   \node(l2) at (15,3){};

   \node(m2) at (12,6.5){};
   \node(n2) at (15,5){};
    \node[label = {west:{\large$v$}}](a3) at (4,-1) {};
    \node(d3) at (4,.5){};
    \node(g3) at (5,-2){};
    \node(h3) at (4,-2.5){};
    \node[label = {west:{\large$v$}}](a4) at (12,-1) {};
    \node(d4) at (12,.5){};
    \node(g4) at (13,-2){};
    \node(h4) at (12,-2.5){};

    \tikzstyle{every node} = [draw = none,fill = none,scale = .7]
    \draw [-{Latex[length=2mm,width=1.5mm]}](a) --(b);
    \draw[-{Latex[length=2mm,width=1.5mm]}] (c) --(a);
    \draw[loosely dashed] (a) --(d);
    \draw[loosely dashed] (a)--(e);
    \draw[loosely dashed]  (a) --(f);
    \draw [-{Latex[length=2mm,width=1.5mm]}](g) --(a);
    \draw[loosely dashed] (a) --(h);
    \draw[-{Latex[length=2mm,width=1.5mm]}](i) --(a);

    \draw (b)[loosely dashed]  --(i);
    \draw [-{Latex[length=2mm,width=1.5mm]}](k)--(j);
    \draw [-{Latex[length=2mm,width=1.5mm]}](b)--(k);
    \draw [-{Latex[length=2mm,width=1.5mm]}](j)--(l);
    \draw [-{Latex[length=2mm,width=1.5mm]}](l)--(f);

    \draw [-{Latex[length=2mm,width=1.5mm]}](d)--(g);
    \draw [-{Latex[length=2mm,width=1.5mm]}](h)--(g);

    \draw [-{Latex[length=2mm,width=1.5mm]}](m)--(c);
    \draw [-{Latex[length=2mm,width=1.5mm]}](e)--(m);
    \draw [-{Latex[length=2mm,width=1.5mm]}](n)--(e);

    \node[fill = none]at (3.85,3.6) {\textcolor{red}{1}};
    \node[fill = none]at (5.5,3.7) {\textcolor{red}{-1}};

    \draw [loosely dashed](a2)--(b2);
    \draw (a2)--(c2);
    \draw (a2)--(d2);
    \draw[loosely dashed] (a2) --(e2);
    \draw(a2) --(f2);
    \draw (a2) --(g2);
    \draw[loosely dashed] (a2)--(h2);
    \draw (a2) --(i2);

    \draw [loosely dashed](b2) --(i2);
    \draw (j2)--(k2);
    \draw (b2)--(k2);
    \draw(j2)--(l2);
    \draw (f2)--(l2);

    \draw [loosely dashed](d2)--(g2);
    \draw (g2)--(h2);

    \draw (c2)--(m2);
    \draw (e2)--(m2);
    \draw (e2)--(n2);

     \draw[loosely dashed] (a3) --(d3);
    \draw [-{Latex[length=2mm,width=1.5mm]}](g3) --(a3);
    \draw[loosely dashed] (a3) --(h3);
    \draw [-{Latex[length=2mm,width=1.5mm]}](d3)--(g3);
    \draw [-{Latex[length=2mm,width=1.5mm]}](h3)--(g3);

    \node[fill = none]at (3.8,-.6) {\textcolor{red}{-1}};
    \node[fill = none]at (3.8,.8) {\textcolor{red}{1}};

    \draw (a4) --(d4);
    \draw(g4) --(a4);
    \draw[loosely dashed] (a4) --(h4);
    \draw[loosely dashed] (d4)--(g4);
    \draw (h4)--(g4);

\end{tikzpicture}
\caption{A demonstration of Lemma \ref{dif1}.}\label{bigpic} 
\end{center}
\end{figure}
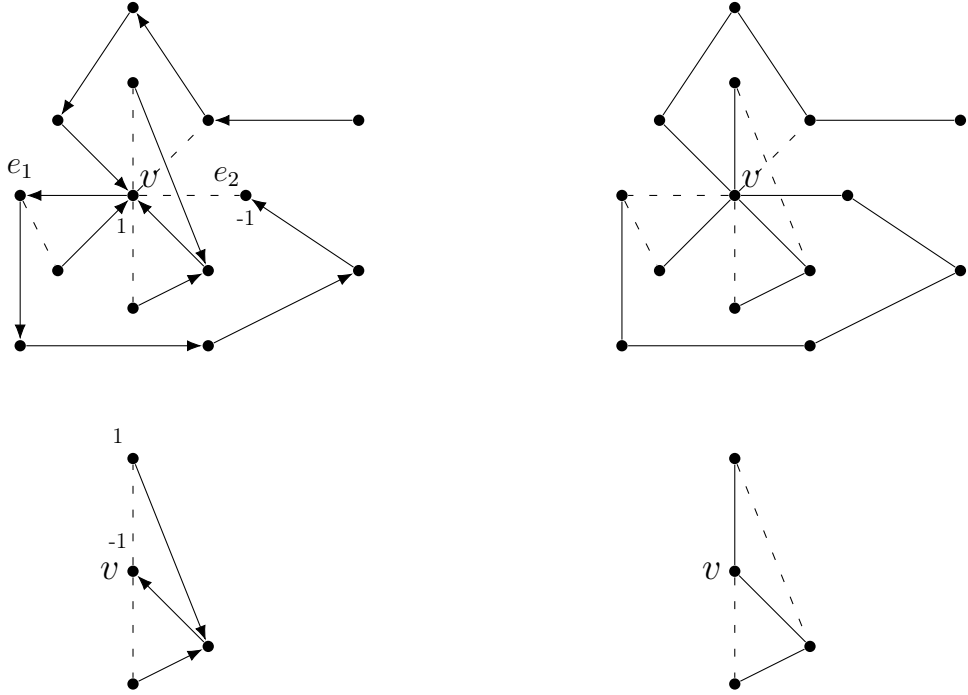

Let $\ribs$ be a ribbon graph with a vertex $v$ such that $v$ is not a cut vertex.\footnote{We use $\ribs$ instead of $\rib$ because we want to think of $\ribs$ as a v-component of a larger ribbon graph.} Let $\{e_1,...,e_n\}$ be the edges of $G'$ incident to $v$. For any $\mathcal E \subseteq \{e_1,...,e_n\}$, let $S_\mathcal E \in \text{Div}^0(G')$ be the configuration that places $-k$ chips on $v$ and $d_x$ chips on each other vertex $x \in V(G')$ where $d_x$ is the number of edges incident to $x$ in $\mathcal E$.

\begin{lem}\label{dif2}
In the construction above, if $S_\mathcal E = S_\mathcal {E'}$ then either $\mathcal E =\mathcal E'$ or one is $\{e_1,...,e_n\}$ and the other is $\emptyset$.
\end{lem}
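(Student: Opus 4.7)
The plan is to translate $S_\mathcal{E} \equiv S_{\mathcal{E}'}$ into a firing-vector equation $\Delta f = S_\mathcal{E} - S_{\mathcal{E}'}$ and then run a maximum principle on $f$, with connectivity of $G' \setminus v$ doing the crucial work at the extrema. To set up, pick any integer $f \colon V(G') \to \Z$ with $\Delta f = S_\mathcal{E} - S_{\mathcal{E}'}$ and, using that the all-ones vector lies in $\ker \Delta$, normalize so that $f(v) = 0$. Writing $d_0(x)$ for the number of edges joining $v$ to $x$ in $G'$ and $\mathbf{d}(x) := d_\mathcal{E}(x) - d_{\mathcal{E}'}(x)$ for $x \neq v$, the equation unfolds to $(\Delta f)(x) = \mathbf{d}(x)$ for every $x \neq v$, together with the a priori bound $|\mathbf{d}(x)| \leq d_0(x)$ coming from the fact that $\mathcal{E}$ and $\mathcal{E}'$ are honest subsets of the edges at $v$.

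Next, I would run an extremum principle. At a vertex $x_0$ achieving $M := \max_{x \neq v} f(x)$, expanding $(\Delta f)(x_0)$ using $f(v) = 0$ and $f(y) \leq M$ yields
\[ (\Delta f)(x_0) \;\leq\; -d_0(x_0)\, M, \]
with equality iff every $G' \setminus v$-neighbor $y$ of $x_0$ also attains $f(y) = M$. Combining with $\mathbf{d}(x_0) \geq -d_0(x_0)$ forces $M \leq 1$ whenever $d_0(x_0) > 0$. When the maximum is attained only at vertices with $d_0 = 0$, equality in the bound is automatic, so the equality clause propagates the maximum along edges of $G' \setminus v$; connectivity of $G' \setminus v$ (the non-cut-vertex hypothesis) eventually delivers the maximum to a vertex with $d_0 > 0$, again yielding $M \leq 1$. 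A symmetric argument gives $\min f \geq -1$, so $f$ takes values only in $\{-1, 0, 1\}$ on $V(G') \setminus v$. The equality clause also shows that the level sets $\{f = 1\}$ and $\{f = -1\}$ are each closed under $G' \setminus v$-neighbors; being disjoint subsets of a connected graph, at most one can be nonempty, so $f$ is uniformly $0$, $+1$, or $-1$ on $V(G') \setminus v$.

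Reading off the conclusion: $\mathbf{d} \in \{0, d_0, -d_0\}$. If $\mathbf{d} = 0$ then $d_\mathcal{E} = d_{\mathcal{E}'}$, so $\mathcal{E}$ and $\mathcal{E}'$ induce the same divisor on $V(G')$; if $\mathbf{d} = d_0$ then the bounds $0 \leq d_\mathcal{E}, d_{\mathcal{E}'} \leq d_0$ combined with $d_\mathcal{E} - d_{\mathcal{E}'} = d_0$ pin down $d_\mathcal{E} = d_0$ and $d_{\mathcal{E}'} = 0$, giving $\mathcal{E} = \{e_1, \dots, e_n\}$ and $\mathcal{E}' = \emptyset$; the case $\mathbf{d} = -d_0$ is symmetric. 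The step I expect to be the main obstacle is the $d_0(x_0) = 0$ corner of the extremum principle: at a maximizer not adjacent to $v$ the pointwise inequality is vacuous, and one has to invoke the non-cut-vertex hypothesis to propagate the extremum along $G' \setminus v$ until it reaches a vertex where the bound actually bites.
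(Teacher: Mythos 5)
Your proof is correct, and it reaches the conclusion by a genuinely different (though closely related) route from the paper's. The paper stays entirely in the chip-firing picture: it takes the representative of $S_{\mathcal E}-S_{\mathcal E'}$ with a nonnegative number of chips on $v$, invokes Lemma~\ref{lem1} to produce a firing sequence to the zero configuration, and then propagates the obligation to fire from any charged vertex through $G'\setminus v$ (this is where the non-cut-vertex hypothesis enters) to conclude that every vertex would have to fire, which is impossible for a suitably normalized firing vector unless the configuration is already zero or is annihilated by firing $v$ alone. You instead work with the potential $f$ solving $\Delta f=S_{\mathcal E}-S_{\mathcal E'}$, normalize $f(v)=0$, and run a discrete maximum principle: the a priori bound $|\mathbf d|\le d_0$ caps $\max f$ and $\min f$ at $\pm 1$, the equality analysis shows the extreme level sets are closed under adjacency in $G'\setminus v$, and connectivity then forces $f$ to be constant in $\{-1,0,1\}$ off $v$, whence $\mathbf d\in\{0,d_0,-d_0\}$ and the three cases of the statement drop out by inspection. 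The two arguments exploit the identical structural fact (connectedness of $G'\setminus v$), and yours is essentially the dual formulation: the paper tracks which vertices must fire, while you track the level sets of the firing vector itself. What your version buys is a cleaner endgame --- a positive structural conclusion rather than a contradiction --- and it makes explicit the normalization (subtracting a constant from $f$) that the paper's final step ``firing every vertex is equivalent to firing no vertices'' leaves tacit. You correctly flagged the $d_0(x_0)=0$ corner as the delicate point and handled it properly by propagating the extremum until it meets a neighbor of $v$. One caveat you share with the paper: when $v$ and some $x$ are joined by parallel edges, $\mathbf d=0$ only yields $d_{\mathcal E}=d_{\mathcal E'}$, i.e.\ equality of the induced divisors rather than of the sets $\mathcal E$ and $\mathcal E'$ themselves; your phrasing acknowledges this, and it is harmless for how the lemma is applied.
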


\begin{proof} $S_\mathcal E = S_\mathcal {E'} $ if and only if $S_\mathcal E - S_\mathcal {E'} = \text{Id}$. Because $S_\mathcal E - S_\mathcal {E'}$ and $S_{\mathcal E'} - S_\mathcal E$ sum to the all zeros configuration, at least one of them must have a nonnegative number of chips placed on $v$. Call this configuration $S'$. By Lemma \ref{lem1}, if $S'$ is equivalent to the identity, then we can get from $S'$ to the all zeros configuration by firing vertices. Consider a sequence of firings that results in the all zeros configuration. Because the only way for a vertex to lose chips is to be fired, if $v$ starts with a positive number of chips, it must be fired. Furthermore, if $v$ starts with 0 chips, then unless $S'$ is already the all zeros configuration (which only occurs when $\mathcal E = \mathcal E'$), some vertex must have a positive number of chips, and must therefore fire. By definition of $S_\mathcal E$ and $S_{\mathcal E'}$, the only vertices with a possibly nonzero number of chips in $S'$ are those adjacent to $v$, so some vertex adjacent to $v$ must fire. This deposits at least one chip to $v$, which means that $v$ now has a finite number of chips and must fire as well.\\

We have shown that if $\mathcal E \not= \mathcal E'$, then $v$ must fire at some point. Since the ordering of firings is irrelevant, we can assume that $v$ fires first. Note that every vertex $x$ in $S'$ has either $0,d_x,$ or $-d_x$ chips on it where $d_x$ is the number of edges connecting $v$ to $x$. Thus, after firing $v$, each edge has either $0,d_x,$ or $2d_x$ chips. If every vertex ends up with 0 chips, then we have reached the all zeros configuration with only a single firing. This only occurs if every vertex began with $-d_x$ chips. By construction, we see that this occurs if $E = \{e_1,...,e_n\}$ and $E' = \emptyset$ (or vice versa). Otherwise, some vertex has a positive number of chips and every other vertex has a nonnegative number of chips. By the same reasoning used in the previous proposition, since we have only a single $v$-component, all non $v$ vertices must fire at least once. However, since $v$ also must fire, this means that every vertex must fire at least once when going from $S'$ to the all zeros configuration. This cannot be required since firing every vertex is equivalent to firing no vertices.\end{proof}

By combining the results of the last two lemmas, for a ribbon graph $\rib$ we are able to find exactly which edges from one $v$-component $\ribs$ fall between two sequential edges in a second $v$-component$\ribss$ with one exception. If all of the edges of $\ribs$ fall between the same two edges of $\ribss$, then we cannot always determine which pair of edges they fall between. However, the following lemma shows that any ambiguities in $\rho_v$ can be resolved with no effect on the genus of $\rib$. 

Let $\rib$ be a ribbon graph, and $v \in V(G)$ such that $\rho_v = (e_1,...,e_{i+j})$. Assume that for all $1\le k \le i$ and $i+1 \le l \le i+j$, $e_k$ and $e_l$ are on different $v$-components of $\rib$. Let $\ribs$ be the union of all $v$-components non-trivially intersecting $\{e_1,..,e_i\}$ and $\ribss$ be the union of all $v$-components non-trivially intersecting $\{e_{i+1},..,e_{i+j}\}$ where $\{\rho'_{v_k}\}$ and $\{\rho''_{v_k}\}$ are defined naturally as restrictions of $\{\rho_{v_k}\}$ (see Figure \ref{sub}).

\begin{figure}
\begin{center}
\begin{tikzpicture}

    \tikzstyle{every node} = [circle,fill,inner sep=1pt,minimum size = 1.5mm]
    \node [label = {north:{\large$v$}}](a) at (4,4){};

    \tikzstyle{every node} = [draw = none,fill = none,scale = .7]
    \draw (a)  .. controls (3,3.5) .. (2,3.2)
        node[pos = .9,below]{$e_1$};
    \draw (a)  .. controls (3,3.75) .. (2,3.6)
        node[pos = .9,below]{$e_2$};
    \draw [loosely dashed](a) .. controls (3,4) .. (2,4);
    \draw (a) .. controls (3,4.25) .. (2,4.4)
         node[pos = .9,above]{$e_{i-1}$};
    \draw (a) .. controls (3,4.5) .. (2,4.8)
         node[pos = .9,above]{$e_{i}$};

    \draw[dashed] (1.2,4) ellipse (3.2 and 1.5);

    \draw (a)  .. controls (5,3.5) .. (6,3.2)
        node[pos = .9,below]{$e_{i+j}$};
    \draw (a)  .. controls (5,3.75) .. (6,3.6)
        node[pos = .9,below]{$e_{i+j-1}$};
    \draw [loosely dashed](a) .. controls (5,4) .. (6,4);
    \draw (a) .. controls (5,4.25) .. (6,4.4)
         node[pos = .9,above]{$e_{i+2}$};
    \draw (a) .. controls (5,4.5) .. (6,4.8)
         node[pos = .9,above]{$e_{i+1}$};

    \draw[dashed] (6.8,4) ellipse (3.2 and 1.5);
    
    \tikzstyle{every node} = [draw = none,fill = none, scale = 1.5]
    \node (g) at (1.2,4) {$G'$};
    \node(h) at (6.8,4) {$G''$};
    \node(i) at (4,5.7) {{\Large$G$}};
\end{tikzpicture}
\caption{The genus of the full ribbon graph is the sum of the genera of the two ribbon subgraphs } 
\label{sub} 
\end{center}
\end{figure}
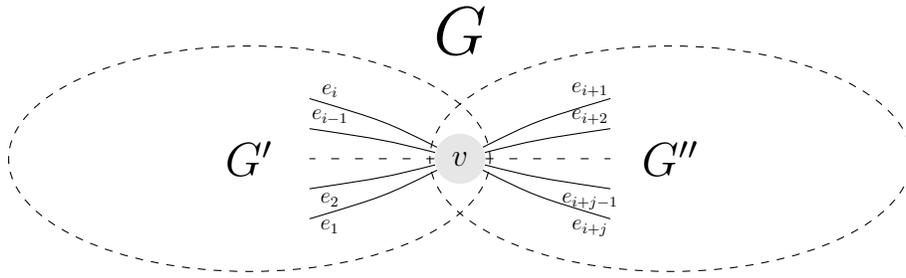

\begin{lem}\label{subgraphs}
In the above construction, the genus of $\rib$ is the sum of the genus of $\ribs$ and the genus of $\ribss$.\\ 
\end{lem}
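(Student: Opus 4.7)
The plan is to reduce the statement to a counting identity about cycles (faces) of the three ribbon graphs and then apply Proposition~\ref{kauf}. Concretely, since $G' \cap G'' = \{v\}$ and $E(G') \cap E(G'') = \emptyset$, we immediately get $|V(G)| = |V(G')| + |V(G'')| - 1$ and $|E(G)| = |E(G')| + |E(G'')|$. Writing out the three instances of Proposition~\ref{kauf} and adding, one sees that $g = g' + g''$ is equivalent to the single face-counting identity
\[
\text{cyc}(G,\rho) \;=\; \text{cyc}(G',\rho') + \text{cyc}(G'',\rho'') - 1.
\]
So the remainder of the proof is devoted to establishing this identity.

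To set up the comparison, I would describe each cycle as a sequence of half-edges where, upon entering a vertex $u$ along an edge $e$, the cycle leaves along the next edge after $e$ in $\rho_u$. Outside of $v$ the cyclic orders $\rho$, $\rho'$, and $\rho''$ agree (on their respective edge sets), so the only place the three ribbon graph structures can differ is at $v$. For a half-edge incident to any vertex $u \ne v$, the successor is the same in $G$ and in whichever of $G'$, $G''$ contains $u$. At $v$, the successor of $e_k$ is $e_{k+1}$ in $\rho_v$; inside $\rho'_v$ the successors of $e_1,\ldots,e_{i-1}$ are identical to those in $\rho_v$, and the only discrepancy on the $G'$-side is that $\rho'_v$ sends $e_i$ to $e_1$ while $\rho_v$ sends $e_i$ to $e_{i+1}$. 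Symmetrically, on the $G''$-side, $\rho''_v$ sends $e_{i+j}$ to $e_{i+1}$ while $\rho_v$ sends $e_{i+j}$ to $e_1$.

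Now the core combinatorial observation: every cycle of $G'$ that does not use the transition $e_i \mapsto e_1$ is a cycle of $G$ verbatim, and similarly for cycles of $G''$ avoiding $e_{i+j} \mapsto e_{i+1}$. There is exactly one cycle $C'$ of $(G',\rho')$ containing the transition $e_i \mapsto e_1$, and exactly one cycle $C''$ of $(G'',\rho'')$ containing $e_{i+j} \mapsto e_{i+1}$ (since each half-edge belongs to a unique cycle). In $(G,\rho)$ these two cycles are spliced into a single cycle: starting at the half-edge $(e_i,v)$ and following $\rho$ traces along $C'$ until it reaches $v$ via $e_i$ (where in $G'$ it would have closed up via $e_1$), then jumps to $e_{i+1}$ and traces $C''$ until returning along $e_{i+j}$ (where in $G''$ it would have closed up via $e_{i+1}$), then jumps back to $e_1$ and closes up. Every other cycle of $G$ corresponds bijectively to a cycle of $G'$ or $G''$, giving the desired $\text{cyc}(G,\rho) = \text{cyc}(G',\rho') + \text{cyc}(G'',\rho'') - 1$.

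The only point that requires some care is the splicing argument in the last paragraph: one must verify that $C'$ and $C''$ really do merge into a single cycle rather than splitting further, and that no other cycles are affected. This follows because the only two successor rules that change between $\rho$ and $\{\rho',\rho''\}$ are the two boundary transitions described above, and each of those transitions lies in exactly one $(G',\rho')$- or $(G'',\rho'')$-cycle. Plugging the face identity back into the three instances of Proposition~\ref{kauf} then yields $g = g' + g''$.
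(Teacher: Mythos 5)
Your proposal is correct and takes essentially the same route as the paper: both reduce the claim via Proposition~\ref{kauf} and the counts $|V(G')|+|V(G'')|=|V(G)|+1$, $|E(G')|+|E(G'')|=|E(G)|$ to the face identity $\text{cyc}\rib = \text{cyc}\ribs + \text{cyc}\ribss - 1$, and both establish that identity by noting that exactly one cycle of $\rib$ is the splice of one cycle of $\ribs$ with one cycle of $\ribss$ at the two boundary transitions at $v$, with all other cycles in bijection. Your explicit successor-rule bookkeeping at $v$ is just a more detailed rendering of the paper's observation that this exceptional cycle restricts to a cycle of each subgraph and is therefore double-counted.
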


\begin{proof} First, we note that $|E(G')| + |E(G'')| = |E(G)|$ because every edge in $G$ is in exactly one of the subgraphs. Furthermore, $|V(G')| + |V(G'')| = |V(G)| + 1$ because every vertex in $G$ is in exactly one of the subgraphs, except for $v$ which is in both.\\  

Next note that every cycle of $\rib$ is entirely contained in either $\ribs$ or $\ribss$ unless it enters $v$ on the edge $e_i$ or the edge $e_{i+j}$. We also know that these two half edges must be part of the same cycle because after the cycle leaves $\ribs$ via $e_i$, it must enter $\ribs$ again via $e_{i+j}$ or it would not be a closed loop. Because this cycle remains a cycle when restricted to either $\ribs$ or $\ribss$ it is double counted when summing $|\text{cyc}(G')|$ and $|\text{cyc}(G'')|$. Thus, we have $|\text{cyc}(G')| + |\text{cyc}(G'')| = |\text{cyc}(G)| + 1$.

Now, we use the genus formula given in Proposition \ref{kauf}, 

$$g(\rib) = \frac{|E(G)|-|V(G)|-|\text{cyc}(G)|+2}2 = $$

$$=\frac{|E(G')| + |E(G'')|-|V(G')|-|V(G'')| + 1 - |\text{cyc}(G')| - |\text{cyc}(G'')| + 1+2}2 = $$

$$=\frac{(|E(G')| -V(G') - |\text{cyc}(G_1)| +2) + (|E(G'')| - |V(G'')| - |\text{cyc}(G'')|+2)}2 =$$

$$=g(\ribs) + g(\ribss).$$ \end{proof}

We are now ready to prove Theorem \ref{algorithm}.

\begin{proof}[Proof of Theorem \ref{algorithm}]
Choose any vertex $v \in V(G)$ and consider its $v$-components. Lemma \ref{same} gives us the order of edges for each $v$-component while Lemma \ref{dif1} gives which edges of one $v$-component are between two given edges of another $v$-component. Lemma \ref{dif2} tells us that there is potential ambiguity if all of the edges in one $v$-component fall between the same two edges of another $v$-component. However, Lemma \ref{subgraphs} resolves this ambiguity by allowing us to choose arbitrarily when we cannot deduce cyclic order from the previous lemmas with no effect on the ribbon graph's genus. If we repeat this procedure for every vertex of $G$, we have deduced the cyclic orders for a ribbon graph with the same genus as $\rib$. Thus, we can use Proposition \ref{kauf} to determine the genus of $\rib$. \end{proof}

Finally, we conjecture that the same theorem holds for the Bernardi process.\\
\begin{conj}
Let $\rib$ be a ribbon graph. Suppose that we are given $V(G)$, $E(G)$, $\text{Pic}^0(G)$, $\Tau(G) \subset E(G)$ and for every $v \in V(G)$, we are given the map 

$$\text{Pic}^0(G) \times \Tau(G) \xto{\beta_v(\text{Pic}^0(G))} \Tau(G)$$

\noindent where $\beta_v$ is the Bernardi process with basepoint $v$. Then, it is possible to determine the genus of $\rib$.\\
\end{conj}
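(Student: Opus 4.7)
The plan is to mirror the proof of Theorem~\ref{algorithm}, replacing each rotor-routing argument with a Bernardi analog. Two of the four ingredients used there---Lemma~\ref{dif2} (on firing-equivalence of chip configurations supported off $v$) and Lemma~\ref{subgraphs} (on additivity of genus across $v$-components)---are purely combinatorial statements about the ribbon graph and sandpile group, so they apply verbatim. The technical work is to establish Bernardi analogs of Proposition~\ref{same} and Lemma~\ref{dif1}.

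For the analog of Proposition~\ref{same}, I would keep the same setup: $e_1, e_2$ incident to $v$ in a common $v$-component $\ribs$, with other endpoints $w_1$ and $w_2$; $T$ a spanning tree satisfying Lemma~\ref{tree}; and $S \in \text{Pic}^0(G)$ the element placing $+1$ chip on $v$ and $-1$ on $w_2$. The proposed claim is that $\beta_{w_2}(S,T) \cap E(G') = T' \cup \{e_2\} \setminus \{e_1\}$ if and only if $e_2$ immediately follows $e_1$ in $\rho'_v$. The strategy is to choose the initial half-edge of the Bernardi tour so that it lies on the $T$-path from $w_2$ to $v$ passing through $w_1$, and to first reduce (as in the original proof of Proposition~\ref{same}) to the case where $G$ has only a single $v$-component. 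In that reduced setting the hypotheses of Lemma~\ref{useful} are met, so $\beta_{w_2} = r_{w_2}$, and the original Proposition~\ref{same} supplies the conclusion. The Bernardi analog of Lemma~\ref{dif1} is handled the same way: Lemma~\ref{useful} localizes the Bernardi action to a single $v$-component $\ribs$, and then the chip-accounting defining $S'$ in Lemma~\ref{dif1} carries over unchanged.

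The main obstacle is controlling the inherently global nature of the Bernardi tour. Unlike rotor routing, where chips move locally and the action on separate $v$-components is literally independent, the Bernardi tour traces around the entire spanning tree, so a priori its behavior on one $v$-component can be entangled with the others. Lemma~\ref{useful} already supplies the locality needed when $v$ and $w_2$ are joined by a ``planar corridor,'' and the heart of the plan is to push this locality through the present setup, where the intervening subgraph between $v$ and $w_2$ is exactly the single $v$-component $\ribs$ and the remaining $v$-components each contribute only an internal rotation that cancels out of the tour's effect on $E(G')$. Once this locality is secured, the combinatorial skeleton of the proof of Theorem~\ref{algorithm} applies word-for-word and Proposition~\ref{kauf} then yields the genus.
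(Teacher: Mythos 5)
This statement is left as an open conjecture in the paper---no proof is given, and the paper explicitly identifies the missing ingredient as exactly the step your plan glosses over: ``there is no clear analogue to Proposition~\ref{same}'' for the Bernardi process. Your proposal does not close that gap. The central step---``in the reduced setting the hypotheses of Lemma~\ref{useful} are met, so $\beta_{w_2}=r_{w_2}$''---fails on two counts. First, Lemma~\ref{useful} does not equate the Bernardi process with rotor routing; it says that, under its hypotheses, the \emph{same} process taken at the two basepoints agrees ($\alpha_v=\alpha_w$ for $\alpha$ either $r$ or $\beta$). The only places the paper identifies $\beta$ at one basepoint with $r$ at another are on two-vertex graphs and on the specific graphs of Theorem~\ref{ugly}, and there the identity is verified by direct computation on a generator, not derived from any general principle. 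Second, even for the basepoint-independence statement, the hypotheses of Lemma~\ref{useful} are not met in your reduced setting: that lemma requires the subgraph of all vertices and edges lying on simple paths between $v$ and $w_2$ to be planar and to meet $v$ and $w_2$ in consecutive blocks of their cyclic orders. When $G$ has a single $v$-component and $e_1$, $e_2$, $w_1$, $w_2$ are as in Proposition~\ref{same}, that subgraph is typically all of $G$, and there is no reason for it to be planar---indeed the whole point of the theorem is to detect nonzero genus.

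The other unproven step is the reduction to a single $v$-component. For rotor routing this reduction is genuinely local: a chip's excursion into another $v$-component only turns rotors inside that component and amounts to one extra full rotation at $v$. For Bernardi, the tour visits the half-edges at $v$ of all $v$-components interleaved according to $\rho_v$, and the break divisor is built from the first crossing of each non-tree edge over the entire tour; your assertion that each other component ``contributes only an internal rotation that cancels out of the tour's effect on $E(G')$'' is precisely the global-entanglement problem the paper names as the obstruction, and you give no argument for it. So the proposal reduces the conjecture to two sublemmas (Bernardi analogues of Proposition~\ref{same} and Lemma~\ref{dif1}) and then asserts rather than proves them; the statement remains open.
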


The challenge for this conjecture is that even on a cut-free graph, it is not easy to use the Bernardi process to detect information about the cyclic order around a fixed vertex without information about the cyclic order around other vertices. In other words, there is no clear analogue to Proposition \ref{same}.\\
\acknowledgements{The author would like to thank Melody Chan for asking the question that inspired this paper and Melody Chan, Caroline Klivans, Brian Freidin, Dori Bejleri, and the anonymous reviewers for a great deal of excellent revisions.}

\bibliographystyle{abbrvnat}
\bibliography{Sandpile_Torsor.bib}

\end{document}